\providecommand{\abs}[1]{\left\lvert#1\right\rvert}
\providecommand{\zee}{\mathbb{Z}}
\providecommand{\be}{\begin{equation}}
\providecommand{\ee}{\end{equation}}
\def\ba#1\ea{\begin{align}#1\end{align}}
\def\bas#1\eas{\begin{align*}#1\end{align*}}
\providecommand{\gamh}[1]{\ensuremath{\Gamma_{#1}(H)}}
\providecommand{\gamhg}[1]{\ensuremath{\Gamma_{\geq #1}(H)}}
\providecommand{\rad}[1]{\ensuremath{\rho(#1)}}
\renewcommand{\deg}[1]{\ensuremath{d(#1)}}
\DeclareMathOperator{\dist}{dist}
\newtheorem{theorem}{Theorem}[section]
\theoremstyle{plain}
\newtheorem{corollary}[theorem]{Corollary}
\newtheorem{lemma}[theorem]{Lemma}
\newtheorem{problem}[theorem]{Problem}
\newtheorem{proposition}[theorem]{Proposition}
\newtheorem{remark}[theorem]{Remark}
\begin{document}

\title{Maximizing the order of a regular graph of given valency and second eigenvalue}
\author{Sebastian M. Cioab\u{a}$^{*1}$, Jack H. Koolen$^{\dagger2}$\\
Hiroshi Nozaki$^\ddagger$ \& Jason R. Vermette$^{*1}$\vspace{10pt}\\
\small $^*$Department of Mathematical Sciences,\vspace{-3pt}\\
\small University of Delaware, Newark DE 19716-2553, USA\vspace{4pt}\\
\small $^\dagger$ School of Mathematical Sciences,\vspace{-3pt}\\
\small  University of Science and Technology of China,\vspace{-3pt}\\
\small Wen-Tsun Wu Key Laboratory of the Chinese Academy of Sciences, Hefei, Anhui, China \vspace{4pt}\\
\small $^\ddagger$  Department of Mathematics,\vspace{-3pt}\\
\small   Aichi University of Education, 1 Hirosawa, Igaya-cho, Kariya, Aichi 448-8542, Japan \vspace{4pt}\\
\small {\tt cioaba@udel.edu, koolen@ustc.edu.cn}\vspace{-3pt}\\
\small {\tt hnozaki@auecc.aichi-edu.ac.jp, vermette@udel.edu}}
\date{\today}
\maketitle

\footnotetext[1]{
Research partially supported by the National Security Agency grant
H98230-13-1-0267.}
\footnotetext[2]{JHK is partially supported by the National Natural Science Foundation of China (No.\ 11471009). He also acknowledges the financial support of the Chinese Academy of Sciences under its `100 talent' program.}
\footnotetext[3]{HN is partially supported by JSPS Grants-in-Aid for Scientific Research No.\ 25800011.
}

\begin{abstract}
From Alon and Boppana, and Serre, we know that for any given integer $k\geq 3$ and real number $\lambda<2\sqrt{k-1}$, there are only finitely many $k$-regular graphs whose second largest eigenvalue is at most $\lambda$. In this paper, we investigate the largest number of vertices of such graphs. 
\end{abstract}

\section{Introduction}

For a $k$-regular graph $G$ on $n$ vertices, we denote by $\lambda_1(G)=k>\lambda_2(G)\geq\ldots\geq\lambda_n(G)=\lambda_{\min}(G)$ the eigenvalues of the adjacency matrix of $G$. For a general reference on the eigenvalues of graphs, see \cite{BH,GRb}.

The second eigenvalue of a regular graph is a parameter of interest in the study of graph connectivity and expanders (see \cite{A,BH,HLW} for example). In this paper, we investigate the maximum order $v(k,\lambda)$ of a connected $k$-regular graph whose second largest eigenvalue is at most some given parameter $\lambda$.  As a consequence of work of Alon and Boppana, and of Serre \cite{A,C,F,HLW,KS,LPS,Mo,Ni2,Ni1,Serre1}, we know that $v(k,\lambda)$ is finite for $\lambda<2\sqrt{k-1}$. The recent result of Marcus, Spielman and Srivastava \cite{MSS} showing the existence of infinite families of Ramanujan graphs of any degree at least $3$ implies that $v(k,\lambda)$ is infinite for $\lambda\geq2\sqrt{k-1}$. 

For any $\lambda<0$, the parameter $v(k,\lambda)$ can be determined using the fact that a graph with only one nonnegative eigenvalue is a  complete graph. Indeed, if a graph has only one nonnegative eigenvalue, then it must be connected. If a connected graph $G$ is not a complete graph, then $G$ contains an induced subgraph isomorphic to $K_{1,2}$, so Cauchy eigenvalue interlacing (see \cite[Proposition 3.2.1]{BH}) implies $\lambda_2(G)\geq\lambda_2(K_{1,2})=0$, contradiction. Thus $v(k,\lambda)=k+1$ for any $\lambda<0$ and the unique graph meeting this bound is $K_{k+1}$. The parameter $v(k,0)$ can be determined using the fact that a graph with exactly one positive eigenvalue must be a complete multipartite graph (see \cite[page 89]{BCN}). The largest $k$-regular complete multipartite graph is the complete bipartite graph $K_{k,k}$, since a $k$-regular $t$-partite graph has $t k/(t-1)$ vertices. Thus $v(k,0)=2k$, and $K_{k,k}$ is the unique graph meeting this bound. The values of $v(k,-1)$ and $v(k,0)$ also follow from Theorem \ref{bound} in Section \ref{lp} below.

Results from Bussemaker, Cvetkovi\'c and Seidel  \cite{BCS} and Cameron, Goethals, Seidel, and Shult \cite{CGSS} give a characterization of the regular graphs with smallest eigenvalue $\lambda_{\min}\geq-2$. Since the second eigenvalue of the complement of a regular graph is $\lambda_2=-1-\lambda_{\min}$, the regular graphs with second eigenvalue $\lambda_2\leq1$ are also characterized. This characterization can be used to find $v(k,1)$ (see Section \ref{vk1}).
 
The values remaining to be investigated are $v(k,\lambda)$ for $1<\lambda<2\sqrt{k-1}$.  The parameter $v(k,\lambda)$ has been studied by Teranishi and Yasuno \cite{TeYa} and H{\o}holdt and Justesen \cite{HJ} for the class of bipartite graphs in connection with problems in design theory, finite geometry and coding theory. Some results involving $v(k,\lambda)$ were obtained by Koledin and Stan\'{i}c \cite{KoSt1,KoSt2, Stanic} and Richey, Shutty and Stover \cite{RSS} who implemented Serre's quantitative version of the Alon--Boppana Theorem \cite{Serre1} to obtain upper bounds for $v(k,\lambda)$ for several values of $k$ and $\lambda$. For certain values of $k$ and $\lambda$, Richey, Shutty and Stover \cite{RSS} made some conjectures about $v(k,\lambda)$. We will prove some of their conjectures and disprove others in this paper. Reingold, Vadhan and Wigderson \cite{RVW} used regular graphs with small second eigenvalue as the starting point of their iterative construction of infinite families of expander using the zig-zag product. Guo, Mohar, and Tayfeh-Rezaie \cite{GM,Mo2,MT} studied a similar problem involving the median eigenvalue. Nozaki \cite{N} investigated a related, but different problem from the one studied in our paper, namely finding the regular graphs of given valency and order with smallest second eigenvalue. Amit, Hoory and Linial \cite{AHL} studied a related problems of minimizing $\max(|\lambda_2|,|\lambda_n|)$ for regular graphs of given order $n$, valency $k$ and girth $g$.

In this paper, we determine $v(k,\lambda)$ explicitly for several values of $(k,\lambda)$, confirming or disproving several conjectures in \cite{RSS}, 
\begin{table}\small%
\centering
\caption{Summary of our Results for $k\leq22$}
\label{summarytable}
\begin{tabular}{|c|c|c|c|c|c|c|c|} 
\hline
$(k,\lambda)$ & $v(k,\lambda)$&& $(k,\lambda)$ & $v(k,\lambda)$&&$(k,\lambda)$ & $v(k,\lambda)$\\
\hline
 $(2,-1)$ & 3 &   & $(7,1)$ & 18 &   & $\left(14,\sqrt{13}\right)$ & 366 \\
 $(2,0)$ & 4 &   & $(7,2)$ & 50 &   & $\left(14,\sqrt{26}\right)$ & 4760 \\
 $\left(2,\frac{1}{2} \left(\sqrt{5}-1\right)\right)$ & 5 &   & $(8,-1)$ & 9 &   &
   $\left(14,\sqrt{39}\right)$ & 804468 \\
 $(2,1)$ & 6 &   & $(8,0)$ & 16 &   & $(15,-1)$ & 16 \\
 $\left(2,\sqrt{2}\right)$ & 8 &   & $(8,1)$ & 21 &   & $(15,0)$ & 30 \\
 $\left(2,\frac{1}{2} \left(\sqrt{5}+1\right)\right)$ & 10 &   & $\left(8,\sqrt{7}\right)$ &
   114 &   & $(15,1)$ & 32 \\
 $\left(2,\sqrt{3}\right)$ & 12 &   & $\left(8,\sqrt{14}\right)$ & 800 &   & $(16,-1)$ & 17
   \\
 $(3,-1)$ & 4 &   & $\left(8,\sqrt{21}\right)$ & 39216 &   & $(16,0)$ & 32 \\
 $(3,0)$ & 6 &   & $(9,-1)$ & 10 &   & $(16,1)$ & 34 \\
 $(3,1)$ & 10 &   & $(9,0)$ & 18 &   & $(16,2)$ & 77 \\
 $\left(3,\sqrt{2}\right)$ & 14 &   & $(9,1)$ & 24 &   & $(17,-1)$ & 18 \\
 $\left(3,\sqrt{3}\right)$ & 18 &   & $\left(9,2 \sqrt{2}\right)$ & 146 &   & $(17,0)$ & 34
   \\
 $(3,2)$ & 30 &   & $(9,4)$ & 1170 &   & $(17,1)$ & 36 \\
 $\left(3,\sqrt{6}\right)$ & 126 &   & $\left(9,2 \sqrt{6}\right)$ & 74898 &   & $(18,-1)$ &
   19 \\
 $(4,-1)$ & 5 &   & $(10,-1)$ & 11 &   & $(18,0)$ & 36 \\
 $(4,0)$ & 8 &   & $(10,0)$ & 20 &   & $(18,1)$ & 38 \\
 $(4,1)$ & 9 &   & $(10,1)$ & 27 &   & $\left(18,\sqrt{17}\right)$ & 614 \\
 $\left(4,\sqrt{5}-1\right)$ & 10 &   & $(10,2)$ & 56 &   & $\left(18,\sqrt{34}\right)$ &
   10440 \\
 $\left(4,\sqrt{3}\right)$ & 26 &   & $(10,3)$ & 182 &   & $\left(18,\sqrt{51}\right)$ &
   3017196 \\
 $(4,2)$ & 35 &   & $\left(10,3 \sqrt{2}\right)$ & 1640 &   & $(19,-1)$ & 20 \\
 $\left(4,\sqrt{6}\right)$ & 80 &   & $\left(10,3 \sqrt{3}\right)$ & 132860 &   & $(19,0)$ &
   38 \\
 $(4,3)$ & 728 &   & $(11,-1)$ & 12 &   & $(19,1)$ & 40 \\
 $(5,-1)$ & 6 &   & $(11,0)$ & 22 &   & $(20,-1)$ & 21 \\
 $(5,0)$ & 10 &   & $(11,1)$ & 24 &   & $(20,0)$ & 40 \\
 $(5,1)$ & 16 &   & $(12,-1)$ & 13 &   & $(20,1)$ & 42 \\
 $(5,2)$ & 42 &   & $(12,0)$ & 24 &   & $\left(20,\sqrt{19}\right)$ & 762 \\
 $\left(5,2 \sqrt{2}\right)$ & 170 &   & $(12,1)$ & 26 &   & $\left(20,\sqrt{38}\right)$ &
   14480 \\
 $\left(5,2 \sqrt{3}\right)$ & 2730 &   & $\left(12,\sqrt{11}\right)$ & 266 &   &
   $\left(20,\sqrt{57}\right)$ & 5227320 \\
 $(6,-1)$ & 7 &   & $\left(12,\sqrt{22}\right)$ & 2928 &   & $(21,-1)$ & 22 \\
 $(6,0)$ & 12 &   & $\left(12,\sqrt{33}\right)$ & 354312 &   & $(21,0)$ & 42 \\
 $(6,1)$ & 15 &   & $(13,-1)$ & 14 &   & $(21,1)$ & 44 \\
 $\left(6,\sqrt{5}\right)$ & 62 &   & $(13,0)$ & 26 &   & $(22,-1)$ & 23 \\
 $\left(6,\sqrt{10}\right)$ & 312 &   & $(13,1)$ & 28 &   & $(22,0)$ & 44 \\
 $\left(6,\sqrt{15}\right)$ & 7812 &   & $(14,-1)$ & 15 &   & $(22,1)$ & 46 \\
 $(7,-1)$ & 8 &   & $(14,0)$ & 28 &   & $(22,2)$ & 100 \\
 $(7,0)$ & 14 &   & $(14,1)$ & 30 &   &  &  \\
\hline
\end{tabular}
\end{table}
and we find the graphs (in many cases unique) which meet our bounds. In many cases these graphs are distance-regular. For definitions and notations related to distance-regular graphs, we refer the reader to \cite[Chapter 12]{BH}. Table \ref{summarytable} contains a summary of the values of $v(k,\lambda)$ that we found for $k\leq22$. Table \ref{tab:1} contains six infinite families of graphs and seven sporadic graphs meeting the bound $v(k,\lambda)$ for some values of $k,\lambda$ due to Theorem \ref{bound}. Table \ref{tab:2} illustrates that the graphs in Table \ref{tab:1} that meet the bound $v(k,\lambda)$ also meet the bound $v(k,\lambda')$ for certain $\lambda'>\lambda$ due to Proposition \ref{prop:large}.

\section{Linear programming method}\label{lp}

In this section, we give a bound for $v(k,\lambda)$ using the linear programming method developed by Nozaki $\cite{N}$. 
Let $F_i=F_i^{(k)}$ be orthogonal polynomials defined by the three-term recurrence relation:  
\begin{equation*}
F_0^{(k)}(x)=1, \qquad F_1^{(k)}(x)=x, \qquad  F_2^{(k)}(x)=x^2 - k,   
\end{equation*}
and
\begin{equation*}
F_i^{(k)}(x)=x F_{i-1}^{(k)}(x)- (k-1) F_{i-2}^{(k)}(x)
\end{equation*}
for $i\geq 3$. The following is called the linear programming bound for regular graphs. 
\begin{theorem}[Nozaki \cite{N}]
\label{thm:lp_bound}
Let $G$ be a connected $k$-regular graph with $v$ vertices. 
Let $\lambda_1=k, \lambda_2, \ldots, \lambda_n $ be the distinct eigenvalues of $G$. 
Suppose there exists a polynomial $f(x)=\sum_{i\geq 0}  f_i F_i^{(k)}(x)$ such that  
$f(k)>0$, $f(\lambda_i) \leq 0$ for any $i\geq 2$, $f_0>0$, and $f_i \geq 0$ for any $i\geq 1$. Then we have
\begin{equation*} 
v \leq \frac{f(k)}{f_0}. 
\end{equation*}
\end{theorem}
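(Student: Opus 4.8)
The plan is to compute the trace of the matrix $f(A)$, where $A$ is the adjacency matrix of $G$, in two different ways and compare. On the spectral side I will bound $\operatorname{tr} f(A)$ from above by $f(k)$; on the combinatorial side, after expanding $f$ in the basis $\{F_i^{(k)}\}$, I will bound $\operatorname{tr} f(A)$ from below by $f_0 v$. Combining the two inequalities yields $f_0 v \le f(k)$, which is exactly the claim once we divide by $f_0>0$.

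For the upper bound, let $\lambda_1=k,\lambda_2,\dots,\lambda_n$ be the distinct eigenvalues of $A$ with multiplicities $m_1,\dots,m_n$, so $\sum_j m_j = v$. Because $G$ is connected and $k$-regular, the Perron--Frobenius theorem shows that $\lambda_1=k$ is simple, i.e.\ $m_1=1$. Writing $f(A)=\sum_j f(\lambda_j)E_j$ for the spectral idempotents $E_j$ gives $\operatorname{tr} f(A)=\sum_j m_j f(\lambda_j)=f(k)+\sum_{j\ge2}m_j f(\lambda_j)$. The hypotheses $f(\lambda_j)\le0$ for $j\ge2$ and $m_j\ge1$ then force $\operatorname{tr} f(A)\le f(k)$.

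For the lower bound, expand $\operatorname{tr} f(A)=\sum_{i\ge0}f_i\operatorname{tr} F_i^{(k)}(A)$ by linearity. The term $i=0$ contributes $f_0\operatorname{tr}(I)=f_0 v$, and since $f_i\ge0$ for $i\ge1$ it suffices to prove that $\operatorname{tr} F_i^{(k)}(A)\ge0$ for every $i\ge1$; this nonnegativity is the heart of the argument and I expect it to be the main obstacle. The route I would take is to identify $F_i^{(k)}(A)$ with the non-backtracking walk matrix $W_i$, whose $(u,v)$ entry counts walks $u=x_0,x_1,\dots,x_i=v$ with $x_{j-1}\ne x_{j+1}$ for all $j$. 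One checks directly that $W_0=I$, $W_1=A$, and $W_2=A^2-kI$, the $-kI$ deleting the $k$ backtracking closed walks of length $2$ at each vertex. For $i\ge3$ I would extend a non-backtracking walk of length $i-1$ by one further edge: the product $AW_{i-1}$ counts all such one-edge extensions, and exactly those reversing the last edge fail to be non-backtracking and must be discarded. These bad extensions are indexed by a non-backtracking walk of length $i-2$ ending at a vertex $v$ together with a choice of neighbor of $v$ other than its predecessor on that walk, giving precisely $k-1$ choices and hence the correction $AW_{i-1}=W_i+(k-1)W_{i-2}$. This matches the defining recurrence of the $F_i^{(k)}$ exactly, so $F_i^{(k)}(A)=W_i$ for all $i$; in particular $F_i^{(k)}(A)$ is entrywise nonnegative and $\operatorname{tr} F_i^{(k)}(A)\ge0$. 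Thus $\operatorname{tr} f(A)\ge f_0 v$.

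Combining the two estimates gives $f_0 v\le\operatorname{tr} f(A)\le f(k)$, and dividing by $f_0>0$ yields $v\le f(k)/f_0$, with the positivity hypotheses $f(k)>0$ and $f_0>0$ ensuring the bound is meaningful. The only delicate point is the walk-counting identity: care is needed to verify that the backtracking correction is independent of the cycle structure of $G$, so that the clean factor $k-1$ appears rather than some graph-dependent quantity. Once this is established, the remainder is routine bookkeeping with traces.
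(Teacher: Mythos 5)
The paper does not prove this statement itself---it is quoted from Nozaki \cite{N}---and your argument is correct and essentially reproduces Nozaki's original proof: the identification $F_i^{(k)}(A)=W_i$ with the non-backtracking walk matrices (via the recurrence $AW_{i-1}=W_i+(k-1)W_{i-2}$ for $i\ge 3$, with the exceptional coefficient $k$ at $i=2$) gives $\operatorname{tr}F_i^{(k)}(A)\ge 0$, and evaluating $\operatorname{tr}f(A)$ spectrally from above and combinatorially from below yields $f_0v\le f(k)$. The one point you flag as delicate is in fact harmless: the correction term counts, for each non-backtracking walk of length $i-2$, the neighbors of its endpoint other than its predecessor, which is always exactly $k-1$ in a simple $k$-regular graph regardless of cycle structure.
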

Using Theorem \ref{thm:lp_bound}, Nozaki \cite{N} proved Theorem \ref{thm:ext} below. Note that the paper \cite{N} deals only with the problem of minimizing the second eigenvalue of a regular graph of given order and valency. While related to the problem of estimating $v(k,\lambda)$, the problem considered by Nozaki in \cite{N} is quite different from the one we study in this paper.
\begin{theorem}[Nozaki \cite{N}] \label{thm:ext}
Let $G$ be a connected $k$-regular graph of girth $g$, with $v$ vertices. Assume the number of distinct eigenvalues of $G$ is $d+1$. If $g \geq 2d$ holds, then $G$ has the smallest second-largest eigenvalue in all $k$-regular graphs with $v$ vertices. 
\end{theorem}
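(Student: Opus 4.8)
The plan is to produce a single polynomial $f$ which, via Theorem \ref{thm:lp_bound}, certifies at once that $G$ maximizes the order and minimizes the second eigenvalue. Write the distinct eigenvalues of $G$ as $k=\lambda_1>\lambda_2>\cdots>\lambda_{d+1}$ and set
\[
f(x)=(x-\lambda_2)\prod_{j=3}^{d+1}(x-\lambda_j)^2 .
\]
This $f$ has degree $2d-1$, satisfies $f(k)>0$, vanishes at every non-principal eigenvalue $\lambda_2,\dots,\lambda_{d+1}$, and is $\le 0$ on all of $(-\infty,\lambda_2]$, since there $(x-\lambda_2)\le 0$ while the remaining factor is a square. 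These are exactly the shape conditions one wants: the largest root of $f$ is $\lambda_2$, and $f$ is nonpositive to the left of it.

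Next I would record the general fact that for a $k$-regular graph of girth $g$ one has $\mathrm{tr}\,F_i^{(k)}(A)=0$ for $1\le i\le g-1$; this holds because the $F_i^{(k)}$ are orthogonal to constants with respect to the spectral (Kesten--McKay) measure of the $k$-regular tree, and the first $g-1$ moments of the spectrum of $G$ agree with that measure, closed walks of length below the girth being tree-like. Since $d+1$ distinct eigenvalues force $\diam(G)\le d$, and the inequality $g\le 2\diam(G)+1$ together with $g\ge 2d$ forces $\diam(G)=d$ and $g\in\{2d,2d+1\}$, we have $\deg f=2d-1\le g-1$. Expanding $f=\sum_i f_i F_i^{(k)}$ and computing $\mathrm{tr}\,f(A)$ in two ways — as $f(k)+\sum_{j\ge2}m_j f(\lambda_j)=f(k)$ (the $\lambda_j$ being roots of $f$) and as $f_0 v+\sum_{i\ge1}f_i\,\mathrm{tr}\,F_i^{(k)}(A)=f_0 v$ — yields the identity $f_0 v=f(k)$. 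In particular $f_0=f(k)/v>0$, and $G$ meets the bound of Theorem \ref{thm:lp_bound} with equality.

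The main obstacle is to prove that every coefficient $f_i$ is strictly positive for $0\le i\le 2d-1$: only then may Theorem \ref{thm:lp_bound} be invoked at all (it requires $f_0>0$ and $f_i\ge 0$), and, as explained below, only strict positivity drives the final contradiction. I would derive this from the orthogonal-polynomial structure of the $F_i^{(k)}$: they enjoy nonnegative linearization (in the $k$-regular tree $F_i^{(k)}(A)$ is the distance-$i$ matrix, and products of distance matrices decompose into distance matrices with nonnegative integer multiplicities), so a product of polynomials each with nonnegative $F$-coefficients again has nonnegative $F$-coefficients. One then tries to exhibit $f$ as such a product, using the location of $\lambda_2,\dots,\lambda_{d+1}$ relative to the zeros of the $F_i^{(k)}$, and pins down strictness by a Christoffel--Darboux computation. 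I expect the honest verification of positivity and strictness to be the technical heart of the argument (it is what is confirmed by small cases such as the Petersen and Heawood graphs).

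Finally, the minimality. Suppose some $k$-regular $H$ on $v$ vertices had $\lambda_2(H)<\lambda_2(G)$. Then all non-principal eigenvalues of $H$ lie in $(-\infty,\lambda_2(G))$, where $f\le 0$, so Theorem \ref{thm:lp_bound} gives $v\le f(k)/f_0=v$, i.e.\ equality. Equality forces two things. First, $f(\mu)=0$ at every non-principal eigenvalue $\mu$ of $H$, so these eigenvalues lie in $\{\lambda_3,\dots,\lambda_{d+1}\}$; hence $H$ has at most $d$ distinct eigenvalues and $\diam(H)\le d-1$. Second, $f_i\,\mathrm{tr}\,F_i^{(k)}(A_H)=0$ for all $i\ge1$, and since every $f_i>0$ we get $\mathrm{tr}\,F_i^{(k)}(A_H)=0$ for $1\le i\le 2d-1$; by the moment discussion this forces $H$ to have girth at least $2d$. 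But then $2d\le \mathrm{girth}(H)\le 2\diam(H)+1\le 2d-1$, a contradiction. Hence no such $H$ exists, and $G$ has the smallest second-largest eigenvalue among all $k$-regular graphs on $v$ vertices.
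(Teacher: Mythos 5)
First, note that the paper does not actually prove this statement: it is quoted verbatim from Nozaki \cite{N}, so the only ``in-house'' material to compare against is the closely related machinery in the proof of Theorem \ref{bound}. Your overall strategy is the right one and matches that machinery: the test polynomial $f(x)=(x-\lambda_2)\prod_{j\ge3}(x-\lambda_j)^2$ of degree $2d-1$, the observation that $\operatorname{tr}F_i^{(k)}(A)=0$ for $1\le i\le g-1$, the resulting identity $f_0v=f(k)$ showing $G$ attains the linear programming bound, and the endgame (equality for a hypothetical better $H$ forces its nontrivial spectrum into $\{\lambda_3,\dots,\lambda_{d+1}\}$ and its girth up to $2d$, contradicting $\operatorname{girth}\le 2\operatorname{diam}+1$) are all correct and are essentially Nozaki's argument.

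However, there is a genuine gap exactly where you flag it: you never prove that $f_i\ge 0$ for $i\ge 1$ (needed even to invoke Theorem \ref{thm:lp_bound}) nor the strict positivity $f_i>0$ (needed to deduce $\operatorname{tr}F_i^{(k)}(A_H)=0$ and hence the girth lower bound for $H$). This is not a routine verification that can be deferred: positivity of the $F$-coefficients of $(x-\lambda_2)\prod(x-\lambda_j)^2$ depends on where the $\lambda_j$ sit, and the known route to it first establishes that the hypotheses ($d+1$ distinct eigenvalues and girth $\ge 2d$) force $G$ to be \emph{distance-regular} with intersection array $\{k,k-1,\dots,k-1;1,\dots,1,c\}$ --- itself a nontrivial theorem --- so that $\lambda_2,\dots,\lambda_{d+1}$ are precisely the roots of $(c-1)G_{t-2}+G_{t-1}$ with $t=d+1$. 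Only then does $f$ factor as $\bigl(((c-1)G_{t-2}+G_{t-1})/(x-\lambda_2)\bigr)\cdot\bigl(\sum_{i=0}^{t-2}F_i+F_{t-1}/c\bigr)$, with the first factor handled by \cite[Proposition~3.2]{CK07} and the product by the linearization result \cite[Theorem~3]{N}, exactly as in the proof of Theorem \ref{bound}. Your sketch gestures at nonnegative linearization and a Christoffel--Darboux computation but supplies neither the distance-regularity step nor the factorization, so the heart of the proof is missing; everything surrounding it is sound.
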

Note also that while Table~\ref{tab:1} is similar to \cite[Table 2]{N}, the problems and tools in our paper are significantly different from the ones in \cite{N}.

Let $T(k,t,c)$ be the $t \times t$ tridiagonal matrix  with lower diagonal $(1,1,\ldots,1,c)$, upper diagonal $(k,k-1,\ldots, k-1)$, and with constant row sum $k$, where $c$ is a positive real number. Theorem~\ref{bound} is the main theorem in this section and gives a new comprehension of the linear programming method and a general upper bound for $v(k,\lambda)$ without any assumption regarding the existence of some particular graphs.
\begin{theorem} \label{bound}
If $\lambda_2$ is the second largest eigenvalue of $T(k,t,c)$, then
\begin{equation}\label{main}
v(k,\lambda_2) \leq M(k,t,c)=1+\sum_{i=0}^{t-3} k(k-1)^i+ \frac{k(k-1)^{t-2}}{c}. 
\end{equation}
Let $G$ be a $k$-regular connected graph with second largest eigenvalue at most $\lambda_2$, valency $k$, and $v(k, \lambda_2)$ vertices. Then $v(k, \lambda_2) = M(k, t, c)$ if and only if $G$ is distance-regular with quotient matrix $T(k, t, c)$ with respect to the distance-partition.
\end{theorem}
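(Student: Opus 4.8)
The plan is to derive the inequality from Nozaki's linear programming bound (Theorem~\ref{thm:lp_bound}) and to read off the equality case from the conditions under which that bound is attained. First I would record what $M(k,t,c)$ \emph{means} for the matrix $T:=T(k,t,c)$. Since every row of $T$ sums to $k$, the all-ones vector is a right eigenvector for the eigenvalue $k$, which is the largest eigenvalue; write $\theta_1=k>\theta_2\geq\cdots\geq\theta_t$ for the spectrum of $T$, so that $\lambda_2=\theta_2$. Solving $mT=km$ for the left eigenvector gives $m=(1,k,k(k-1),\dots,k(k-1)^{t-3},k(k-1)^{t-2}/c)$, whose entries sum to exactly $M(k,t,c)$. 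Because the off-diagonal products of $T$ are positive, $T$ is similar to a symmetric (Jacobi) tridiagonal matrix, so its eigenvalues are the nodes of a Gauss quadrature rule for a measure $\mu$; the role of the parameter $c$ is that the monic orthogonal polynomials of $\mu$ coincide with $F_0^{(k)},\dots,F_{t-1}^{(k)}$, while the top squared norm is rescaled to $\|F_{t-1}^{(k)}\|_\mu^2=c\,F_{t-1}^{(k)}(k)$ (against the tree measure normalized to mass one one has $\|F_i^{(k)}\|^2=F_i^{(k)}(k)$).

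For the inequality I would feed Theorem~\ref{thm:lp_bound} the test polynomial
\[
f(x)=(x-\lambda_2)\prod_{j=3}^{t}(x-\theta_j)^2 .
\]
The sign hypothesis is then automatic: for $x\le\lambda_2$ the linear factor is $\le 0$ and the product of squares is $\ge 0$, so $f(x)\le 0$; in particular $f(\mu)\le 0$ at every eigenvalue $\mu\neq k$ of any admissible $G$ (all of which are $\le\lambda_2$), while $f(k)>0$. The value of the bound I would compute by quadrature: $f$ has degree $2t-3$, and the tree measure agrees with $\mu$ on all polynomials of degree at most $2t-3$, so the $F_0^{(k)}$-coefficient is $f_0=\int f\,d\mu=w_1 f(k)$, where $w_1$ is the Christoffel weight of $\mu$ at the node $k$ (here $f(\theta_j)=0$ for $j\ge 2$ kills the other nodes). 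The Christoffel--Darboux formula gives $1/w_1=\sum_{i=0}^{t-1}F_i^{(k)}(k)^2/\|F_i^{(k)}\|_\mu^2$, and substituting $\|F_{t-1}^{(k)}\|_\mu^2=c\,F_{t-1}^{(k)}(k)$ collapses this sum to $M(k,t,c)$. Hence $f(k)/f_0=M(k,t,c)$, as required.

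The technical heart --- and the step I expect to be hardest --- is the last hypothesis of Theorem~\ref{thm:lp_bound}, namely that in $f=\sum_i f_iF_i^{(k)}$ one has $f_0>0$ and $f_i\ge 0$ for all $i\ge 1$. Positivity of $f_0=w_1 f(k)$ is clear, but the nonnegativity of the higher coefficients of a linear factor times a perfect square is a genuine statement about the orthogonal polynomials $F_i^{(k)}$ (the Kesten--McKay polynomials of the $k$-regular tree). I would establish it from the nonnegative linearization of these polynomials together with a Christoffel--Darboux argument, writing $f$ through the reproducing kernel of $\mu$ at $k$ and exploiting that $\theta_3,\dots,\theta_t$ are its interlacing nodes; this is where I expect the real work to lie.

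Finally, for the equality characterization I would argue on both sides. If $G$ is distance-regular with quotient $T(k,t,c)$ relative to the distance partition, then its eigenvalues are exactly $\theta_1,\dots,\theta_t$, so $\lambda_2(G)=\lambda_2$, and the sizes of the distance spheres are the entries of $m$ above, whence $|V(G)|=\sum_i m_i=M(k,t,c)$ and the bound is met. Conversely, if an extremal $G$ attains $M(k,t,c)$, then both inequalities in the proof of Theorem~\ref{thm:lp_bound} are tight: $\sum_{i\ge 2}f(\lambda_i(G))=0$ with each summand $\le 0$ forces every non-principal eigenvalue of $G$ to be a root of $f$, i.e.\ to lie in $\{\theta_2,\dots,\theta_t\}$, so $G$ has at most $t$ distinct eigenvalues; and $\sum_{i\ge 1}f_i\,\mathrm{tr}\,F_i^{(k)}(A)=0$ with nonnegative summands forces $\mathrm{tr}\,F_i^{(k)}(A)=0$ whenever $f_i>0$. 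Counting the $M(k,t,c)$ vertices then pins this down to exactly $t$ distinct eigenvalues and diameter $t-1$, and meeting these trace conditions I would upgrade $G$ to a distance-regular graph via the standard characterization (tight interlacing of the distance partition, equivalently the spectral excess theorem), reading its intersection array off from the $\theta_j$ and the sphere sizes to be precisely $T(k,t,c)$. The delicate point on this side is exactly this passage from spectral and trace equalities to genuine combinatorial distance-regularity.
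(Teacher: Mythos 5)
Your overall strategy is the same as the paper's: you feed the test polynomial $f(x)=(x-\lambda_2)\prod_{j=3}^{t}(x-\theta_j)^2$ into Theorem~\ref{thm:lp_bound}, compute $f(k)/f_0=M(k,t,c)$, and read the equality case off the tightness conditions. Your quadrature computation of $f_0$ (Christoffel weight at the node $k$, with $\|F_{t-1}^{(k)}\|_\mu^2=c\,F_{t-1}^{(k)}(k)$) is a legitimate and rather clean alternative to the paper's route, which instead gets $f_0=g(k)$ from Nozaki's linearization theorem; both collapse to $\sum_{i=0}^{t-2}F_i(k)+F_{t-1}(k)/c$. Your treatment of the equality case also matches the paper in substance: tightness forces the spectrum into $\{\theta_2,\dots,\theta_t\}$ and kills $\mathrm{tr}\,F_i(A)$ for $1\le i\le 2t-3$ (hence girth $\ge 2t-2$), after which ``few eigenvalues plus large girth implies distance-regular'' is invoked --- the paper cites Nozaki's Theorem~6 together with Damerell--Georgiacodis, where you propose the spectral excess theorem; either is acceptable at this level of detail.

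The genuine gap is exactly the step you flag as ``where the real work lies'': the nonnegativity of the coefficients $f_i$ in the basis $\{F_i^{(k)}\}$. Saying you would use nonnegative linearization plus a Christoffel--Darboux argument at the node $k$ does not yet constitute a proof, and the relevant structure is actually anchored at $\lambda_2$, not at $k$. The missing idea is the factorization
\[
f(x)=\frac{(c-1)G_{t-2}(x)+G_{t-1}(x)}{x-\lambda_2}\cdot\Bigl(\sum_{i=0}^{t-2}F_i(x)+\tfrac1c F_{t-1}(x)\Bigr),
\]
which holds because $\sum_{i=0}^{t-2}F_i+F_{t-1}/c=\bigl((c-1)G_{t-2}+G_{t-1}\bigr)/c$ by the three-term recurrence, and the zeros of $(c-1)G_{t-2}+G_{t-1}$ are precisely $\theta_2,\dots,\theta_t$ (all simple, by interlacing of the zeros of $G_{t-2}$ and $G_{t-1}$). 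The second factor manifestly has nonnegative $F$-coefficients; the first factor, $g(x)=\bigl((c-1)G_{t-2}+G_{t-1}\bigr)/(x-\lambda_2)$, has positive coefficients in the basis $\{G_0,\dots,G_{t-2}\}$ by Cohn--Kumar \cite[Proposition~3.2]{CK07} (this is the nontrivial positivity input), hence positive $F$-coefficients; and the product of two polynomials with nonnegative $F$-coefficients again has nonnegative $F$-coefficients by Nozaki's linearization theorem \cite[Theorem~3]{N}. Without this factorization (or an equivalent substitute), the claim $f_i\ge 0$ --- the hypothesis on which the entire LP bound rests --- is unproven, so your argument as written does not close.
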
 
\begin{proof}
We first show that the eigenvalues of $T$ that are not equal to $k$, coincide with the zeros of $\sum_{i=0}^{t-2}F_i(x)+F_{t-1}(x)/c$ (see also \cite[Section 4.1 B]{BCN}). 
Indeed, 
\[
[F_0,F_1,\ldots,F_{t-2},F_{t-1}/c]T=[xF_0,xF_1,\ldots,xF_{t-2}, (k-1)F_{t-2}+(k-c) F_{t-1}/c],
\]
and 
\begin{align*}
[F_0,F_1,\ldots,F_{t-2},F_{t-1}/c](T-xI) &=[0,0,\ldots,0, (k-1)F_{t-2}+(-x+k-c) F_{t-1}/c]\\
 &=[0,0,\ldots,0,(k-x)(\sum_{i=0}^{t-2}F_i+F_{t-1}/c)]\\
&=[0,0,\ldots,0,(k-x)((c-1)G_{t-2}+G_{t-1})/c]
\end{align*}
by the three-term recurrence relation, where $G_i(x)=\sum_{j=0}^iF_j(x)$. This equation implies that the zeros of $(k-x)((c-1)G_{t-2}+G_{t-1})$ are eigenvalues of $T$.
The monic polynomials $G_i$ form a sequence of orthogonal polynomials with respect to some positive weight on the interval $[-2\sqrt{k-1}, 2\sqrt{k-1}]$ \cite{N}. 
Since the zeros of $G_{t-2}$ and $G_{t-1}$ interlace on $[-2\sqrt{k-1}, 2\sqrt{k-1}]$, the zeros of $(k-x)((c-1)G_{t-2}+G_{t-1})$ are simple. Therefore all eigenvalues of $T$ coincide with the zeros of  $(k-x)((c-1)G_{t-2}+G_{t-1})$, and are simple. 
 
Let $\lambda_1=k>\lambda_2>\ldots>\lambda_{t}$ be the eigenvalues of $T$.  
We prove that the polynomial 
\begin{equation}
f(x)=\frac{1}{c}\cdot (x-\lambda_2)\prod_{i=3}^{t}(x-\lambda_i)^2=\sum_{i=0}^{2t-3}f_i F_i(x)
\end{equation}
satisfies $f_i>0$ for $i=0,1,\ldots, 2t-3$.  
Note that it trivially holds that $f(k)>0$, and $f(\lambda) \leq 0$ for any $\lambda \leq \lambda_2$. 
The polynomial $f(x)$ can be expressed as
\begin{equation}
f(x)=\frac{(c-1)G_{t-2}+G_{t-1}}{x-\lambda_2}\cdot \left(\sum_{i=0}^{t-2}F_i+F_{t-1}/c\right).
\end{equation}
By \cite[Proposition~3.2]{CK07},  $g(x)=((c-1)G_{t-2}+G_{t-1})/(x-\lambda_2)$ has  positive coefficients in terms of $G_{0},G_1,\ldots, G_{t-2}$. This implies that $g(x)$ has positive 
coefficients in terms of $F_{0},F_1,\ldots, F_{t-2}$. Therefore $f_i>0$ for $i=0,1,\ldots, 2t-3$ by \cite[Theorem~3]{N}. 

The polynomial $g(x)$ can be expressed as $g(x)=\sum_{i=0}^{t-2}g_i F_i(x)$.  
By \cite[Theorem~3]{N}, we get that $f_0=\sum_{i=0}^{t-2} g_i F_i(k)=g(k)$. Using Theorem \ref{thm:lp_bound} for $f(x)$, we obtain that
\begin{align*}
v(k,\lambda_2) &\leq \frac{f(k)}{f_0}=\sum_{i=0}^{t-2}F_i(k)+F_{t-1}(k)/c \\
&=1+\sum_{i=0}^{t-3} k(k-1)^i+ \frac{k(k-1)^{t-2}}{c}. 
\end{align*}
By \cite[Remark~2]{N}, the graph attaining the bound has girth at least $2t-2$, and at most $t$ distinct eigenvalues. Therefore the graph is a distance-regular graph with quotient matrix $T(k,t,c)$ by \cite[Theorem~6]{N} and \cite{DG81}. Conversely the distance-regular graph with  quotient matrix $T(k,t,c)$ clearly attains the bound $M(k,t,c)$. 
\end{proof}
\begin{remark}
The distance-regular graphs which have $T(k,t,c)$ as a quotient matrix of the distance partition are precisely the distance-regular graphs with intersection array $\{k,k-1,\ldots,k-1;1,\ldots,1,c\}$.
\end{remark}
\begin{corollary}
Let $H$ be a connected $k$-regular graph  with at least $M(k,t,c)$ 
vertices. Let $\lambda_2$ be the second largest eigenvalue of $T(k,t,c)$.
Then $\lambda_2 \leq \lambda_2(H)$ holds with equality only if $H$
meets the bound $M(k,t,c)$. 
\end{corollary}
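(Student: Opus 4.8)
The plan is to deduce both assertions directly from Theorem~\ref{bound} and from the definition of $v(k,\lambda_2)$ as the maximum order of a connected $k$-regular graph whose second largest eigenvalue is at most $\lambda_2$. The only additional ingredient I need is the standard fact (recorded essentially in the Remark following Theorem~\ref{bound}) that a distance-regular graph whose distance partition has quotient matrix $T(k,t,c)$ has second largest eigenvalue exactly equal to $\lambda_2$, the second largest eigenvalue of $T(k,t,c)$; this holds because the distinct eigenvalues of a distance-regular graph are precisely the eigenvalues of the tridiagonal quotient (intersection) matrix.

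For the inequality $\lambda_2 \le \lambda_2(H)$, I would argue by contradiction. Suppose $\lambda_2(H) < \lambda_2$. Then $H$ is a connected $k$-regular graph whose second largest eigenvalue is at most $\lambda_2$, so by the definition of $v(k,\lambda_2)$ together with Theorem~\ref{bound} we have
\[
|V(H)| \le v(k,\lambda_2) \le M(k,t,c).
\]
Since by hypothesis $|V(H)| \ge M(k,t,c)$, all of these inequalities are forced to be equalities; in particular $H$ is an extremal graph attaining $v(k,\lambda_2) = M(k,t,c)$. By the equality clause of Theorem~\ref{bound}, $H$ is then distance-regular with quotient matrix $T(k,t,c)$, and hence $\lambda_2(H) = \lambda_2$, contradicting the assumption $\lambda_2(H) < \lambda_2$.

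For the equality case, suppose $\lambda_2(H) = \lambda_2$. Then $H$ again has second largest eigenvalue at most $\lambda_2$, so the same chain $|V(H)| \le v(k,\lambda_2) \le M(k,t,c)$ combined with $|V(H)| \ge M(k,t,c)$ forces $|V(H)| = M(k,t,c)$; that is, $H$ meets the bound. I do not expect a serious obstacle here, since the argument is essentially a repackaging of Theorem~\ref{bound}. The one point that genuinely requires care — and the only place a reader might demand more detail — is justifying that the extremal distance-regular graph really does have $\lambda_2$ as its second eigenvalue rather than some strictly smaller value; this is precisely the spectral characterization of distance-regular graphs via the intersection array, which is what makes the contradiction in the second paragraph go through.
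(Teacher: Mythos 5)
Your proof is correct, and it closes the key step by a different mechanism than the paper does. Both arguments begin identically: assuming $\lambda_2(H)<\lambda_2$, the chain $|V(H)|\le v(k,\lambda_2)\le M(k,t,c)$ together with the hypothesis $|V(H)|\ge M(k,t,c)$ forces equality throughout. You then invoke the full equality clause of Theorem~\ref{bound} --- the extremal graph is distance-regular with quotient matrix $T(k,t,c)$ --- and conclude that $\lambda_2(H)$ equals $\lambda_2$ exactly, contradicting strictness. The paper instead stays one level lower: a graph attaining the linear programming bound has all of its eigenvalues other than $k$ among the zeros of the polynomial $f$ used in the proof of Theorem~\ref{bound}, i.e.\ among $\lambda_2,\dots,\lambda_t$; since $\lambda_2$ is excluded by the strictness assumption, $H$ has at most $t-1$ distinct eigenvalues, hence diameter at most $t-2$, hence order at most the Moore bound $M(k,t-1,1)<M(k,t,c)$ --- a contradiction. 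The two routes have the same strength here; yours is shorter because it leans on the distance-regularity characterization (which rests on \cite[Theorem~6]{N} and \cite{DG81}), while the paper's uses only the spectral consequence of equality in the LP bound (\cite[Remark~2]{N}) plus an elementary diameter count, and so avoids the deeper structural result. The point you flagged as needing care --- that a distance-regular graph with quotient matrix $T(k,t,c)$ has second eigenvalue exactly $\lambda_2$ --- is indeed the load-bearing step of your version, and it is justified exactly as you say: the distinct eigenvalues of such a graph are the eigenvalues of its tridiagonal quotient matrix, which are shown to be simple in the proof of Theorem~\ref{bound}. The equality case is handled the same way in both proofs.
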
 

\begin{proof}
By Theorem~\ref{bound}, if $\lambda_2 > \lambda_2(H)$, then
the order of $H$ is at most $M(k,t,c)$. 
If the order of $H$ is equal to $M(k,t,c)$, then $H$ has  at most 
$t-1$ distinct eigenvalues by \cite[Remark~2]{N}.
However then  the order of $H$ is less than $M(k,t-1,1)$ by the Moore bound, a contradiction.  Therefore if $\lambda_2 > \lambda_2(H)$, then
the order of $H$ is less than $M(k,t,c)$. 
Namely if the order of $H$ is at least $M(k,t,c)$, then $\lambda_2 \leq \lambda_2(H)$. 
If $\lambda_2 = \lambda_2(H)$ holds, then the order of $H$ is bounded above by $M(k,t,c)$ in Theorem~\ref{bound}, and attains the bound. 
\end{proof}

We will discuss a possible second eigenvalue $\lambda_2$ of $T(k,t,c)$. 
Indeed for any $-1\leq \lambda<2\sqrt{k-1}$ there exist $t,c$ such that $\lambda$ is 
the second eigenvalue of $T(k,t,c)$. 
Let $\lambda^{(t)}$, $\mu^{(t)}$ be the largest zero of $G_t$, $F_t$, respectively. 
The zero $\lambda^{(t)}$ can be expressed by $\lambda^{(t)}=2\sqrt{k-1} \cos \theta$, where $\pi/(t+1) < \theta < \pi/t$ \cite[Section III.3]{BIb}. 
\begin{proposition}
The following hold:
 \begin{enumerate}
\item $\lambda^{(t)}< \mu^{(t)}$ for any $k$, $t$. 
\item $\mu^{(t-1)}< \lambda^{(t)}$ for $k \geq 5$ and any $t$, $k = 4$ and $t\leq 5$, or $k = 3$ and $t\leq 3$. 
\item $\mu^{(t-1)}> \lambda^{(t)}$ for $k = 4$ and $t\geq 6$, or $k = 3$ and $t\geq 4$. 
\end{enumerate}
\end{proposition}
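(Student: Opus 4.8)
The plan is to pass to the trigonometric substitution $x=2\sqrt{k-1}\cos\theta$, under which both families become explicit. Writing $U_m(\cos\theta)=\sin((m+1)\theta)/\sin\theta$ for the Chebyshev polynomials of the second kind, an induction on the recurrence gives
\[
F_t\bigl(2\sqrt{k-1}\cos\theta\bigr)=(k-1)^{t/2}\Bigl(U_t(\cos\theta)-\tfrac{1}{k-1}U_{t-2}(\cos\theta)\Bigr),
\]
and, summing this over $j=0,\dots,t$ and telescoping (the $F_0,F_1$ boundary terms cancel),
\[
G_t\bigl(2\sqrt{k-1}\cos\theta\bigr)=(k-1)^{(t-1)/2}\Bigl(U_{t-1}(\cos\theta)+\sqrt{k-1}\,U_t(\cos\theta)\Bigr).
\]
Hence the zeros of $F_t$ in $(-2\sqrt{k-1},2\sqrt{k-1})$ are the $\theta\in(0,\pi)$ with $\sin((t+1)\theta)=\frac{1}{k-1}\sin((t-1)\theta)$, and those of $G_t$ are the $\theta$ with $\sin(t\theta)+\sqrt{k-1}\sin((t+1)\theta)=0$. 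Since $\theta\mapsto 2\sqrt{k-1}\cos\theta$ is decreasing, $\mu^{(t)}$ and $\lambda^{(t)}$ correspond to the \emph{smallest} positive zero-angles $\theta_F^{(t)}$ and $\theta_G^{(t)}$, so every comparison reduces to comparing these angles.

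For part (1) I evaluate $h_F(\theta)=(k-1)\sin((t+1)\theta)-\sin((t-1)\theta)$ at the ends of $(0,\pi/(t+1))$: it is positive as $\theta\to0^+$ (the linear coefficient $(k-1)(t+1)-(t-1)$ is positive for $k\geq3$) and equals $-\sin\frac{(t-1)\pi}{t+1}<0$ at $\theta=\pi/(t+1)$. Thus $\theta_F^{(t)}<\pi/(t+1)$, and combined with the stated fact $\theta_G^{(t)}>\pi/(t+1)$ this gives $\theta_F^{(t)}<\theta_G^{(t)}$, i.e. $\mu^{(t)}>\lambda^{(t)}$.

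For parts (2) and (3) I compare $\theta_F^{(t-1)}$ with $\theta_G^{(t)}$; both lie in $(0,\pi/t)$ (the second by the given bound, the first by the part-(1) bound applied to $F_{t-1}$). Evaluating $h_G(\theta)=\sqrt{k-1}\sin((t+1)\theta)+\sin(t\theta)$ at the points $m\pi/t$ shows it has a unique zero in $(0,\pi/t)$, namely $\theta_G^{(t)}$, and is positive to its left; hence $\mu^{(t-1)}>\lambda^{(t)}$ is equivalent to $h_G(\theta_F^{(t-1)})>0$. The crux is to determine this sign. Using the defining relation $(k-1)\sin(t\theta_F^{(t-1)})=\sin((t-2)\theta_F^{(t-1)})$ together with the angle-addition formulas to eliminate $\cos(t\theta_F^{(t-1)})$, and then clearing the positive factors $\sin(t\theta_F^{(t-1)})$ and $2\cos\theta_F^{(t-1)}$, the expression $h_G(\theta_F^{(t-1)})$ reduces to a positive multiple of
\[
Q(\mu)=\mu^2+\mu-k(k-1)\qquad\text{evaluated at }\mu=\mu^{(t-1)}.
\]
Since $Q(\mu)=(\mu-(k-1))(\mu+k)$ has unique positive root $k-1$, this yields the clean criterion $\mu^{(t-1)}>\lambda^{(t)}\iff\mu^{(t-1)}>k-1$. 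Obtaining this reduction is the main obstacle; everything else is bookkeeping.

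Finally I locate $\mu^{(t-1)}$ relative to $k-1$, using that the largest zeros $\mu^{(s)}$ strictly increase with $s$ and stay below $2\sqrt{k-1}$. Because $k-1\geq2\sqrt{k-1}$ precisely when $k\geq5$, for $k\geq5$ one has $\mu^{(t-1)}<2\sqrt{k-1}\leq k-1$ for all $t$, giving (2) in every case. For $k=4$ we have $k-1=3$ and the exact values $\mu^{(4)}=\sqrt{(10+\sqrt{52})/2}<3<\sqrt{(13+\sqrt{37})/2}=\mu^{(5)}$ (equivalently $52<64$ and $37>25$), so by monotonicity $\mu^{(t-1)}<k-1$ for $t\leq5$ and $\mu^{(t-1)}>k-1$ for $t\geq6$, which are (2) and (3). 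For $k=3$ we have $k-1=2$ and $\mu^{(2)}=\sqrt3<2<\sqrt5=\mu^{(3)}$, giving (2) for $t\leq3$ and (3) for $t\geq4$.
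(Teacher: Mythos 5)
Your proof is correct, but it follows a genuinely different route from the paper's. The paper avoids the Chebyshev parametrization entirely for parts (2)--(3): it uses the identity $(k-1)F_{t-1}=(k-1-x)G_{t-1}+G_t$, evaluates at $x=\lambda^{(t)}$ (where $G_t$ vanishes and $G_{t-1}>0$), and reads off the sign of $F_{t-1}(\lambda^{(t)})$ from the sign of $k-1-2\sqrt{k-1}\cos\theta$ using the same localization $\pi/(t+1)<\theta<\pi/t$ that you quote; combined with the fact that $F_{t-1}$ has a unique zero above $\lambda^{(t-1)}$, the sign of $F_{t-1}(\lambda^{(t)})$ immediately places $\mu^{(t-1)}$ on the correct side of $\lambda^{(t)}$. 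That argument is about four lines long and its case split comes from comparing $\cos(\pi/(t+1))$ and $\cos(\pi/t)$ with $\sqrt{k-1}/2$. Your version works entirely in the angle variable and, after the elimination you describe (which does check out: writing $s_j=\sin(j\theta)$, the relation $(k-1)s_t=s_{t-2}$ gives $s_{t-1}=ks_t/(2\cos\theta)$ and hence $h_G(\theta_F^{(t-1)})=\frac{s_t}{2\cos\theta\,\sqrt{k-1}}\bigl(\mu^2+\mu-k(k-1)\bigr)$), buys a cleaner and arguably more memorable criterion, $\mu^{(t-1)}\gtrless\lambda^{(t)}\iff\mu^{(t-1)}\gtrless k-1$, at the cost of having to compute $\mu^{(2)},\mu^{(3)}$ (for $k=3$) and $\mu^{(4)},\mu^{(5)}$ (for $k=4$) explicitly; those values and the monotonicity of $\mu^{(s)}$ in $s$ (zero interlacing for the orthogonal family $F_i$) are all correct. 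Two small points to tidy up: the division by $2\cos\theta_F^{(t-1)}$ and the positivity of $\sin(t\theta_F^{(t-1)})$ require $\theta_F^{(t-1)}<\pi/t\leq\pi/3$, which holds for $t\geq3$ but degenerates at $t=2$ where $\theta_F^{(1)}=\pi/2$ (that case is trivial anyway, since $\mu^{(1)}=0<\lambda^{(2)}$); and your part (1) endpoint evaluation $h_F(\pi/(t+1))<0$ likewise needs $t\geq2$. Neither affects the substance.
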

\begin{proof} 
Since $F_t(\lambda^{(t)})=G_t(\lambda^{(t)})-G_{t-1}(\lambda^{(t)})=-G_{t-1}(\lambda^{(t)})<0$, 
we have $\lambda^{(t)}< \mu^{(t)}$ for any $k$, $t$. Note that $F_t$ has a unique zero greater than $\lambda^{(t)}$. By the equality $(k-1)F_{t-1}=(k-1-x)G_{t-1}+G_t$, we obtain that
\begin{align*}
(k-1)F_{t-1}(\lambda^{(t)})&=(k-1-\lambda^{(t)})G_{t-1}(\lambda^{(t)})+G_t(\lambda^{(t)})\\
&=(k-1-\lambda^{(t)})G_{t-1}(\lambda^{(t)})\\
&=(k-1-2\sqrt{k-1} \cos  \theta )G_{t-1}(\lambda^{(t)})\\
&\begin{cases}
> (k-1-2\sqrt{k-1}\cos \frac{\pi}{t+1})G_{t-1}(\lambda^{(t)}) \geq 0 \text{ for $(k,t)$ in (2),}\\
< (k-1-2 \sqrt{k-1}\cos \frac{\pi}{t}) G_{t-1}(\lambda^{(t)}) \leq 0 \text{ for $(k,t)$ in (3).}
\end{cases}
\end{align*}
This finishes the proof of the proposition. 
\end{proof}
\begin{remark} \label{rem:zero}
The second largest eigenvalue $\lambda_2(c)$ of $T(k,t,c)$ is the largest zero  of $(c-1)G_{t-2}+G_{t-1}$. 
Since the zeros of $G_{t-2}$ and $G_{t-1}$ interlace, 
$\lambda_2(c)$ is a monotonically decreasing function in $c$. In particular, 
$\lim_{c\rightarrow \infty }\lambda_2(c)=\lambda^{(t-2)}$, $\lambda_2(1)=\lambda^{(t-1)}$, 
and $\lim_{c \rightarrow 0 }\lambda_2(c)=\mu^{(t-1)}$. 
\end{remark}
Note that both $F_i$ and $G_i$ form a sequence of orthogonal polynomials with
respect to some positive weight on the interval $[-2\sqrt{k-1}, 2\sqrt{k-1}]$. 
By Remark \ref{rem:zero}, the second eigenvalue $\lambda_2(t,c)$ of $T(k,t,c)$ may equal all possible values between $\lambda_2(2,1)=-1$ and $\lim_{t\rightarrow \infty} \lambda_2(t,c)=2\sqrt{k-1}$. The following proposition shows that we may assume $c\geq 1$ in Theorem~\ref{bound} to obtain better bounds. 
\begin{proposition} \label{prop:c}
For any $\lambda$ such that $\lambda^{(t-1)} < \lambda < \mu^{(t-1)}$, there exist $0< c_1 < 1$, $c_2 > 0$ such that 
both the second-largest eigenvalues of $T(k,t,c_1)$ and $T(k,t+1,c_2)$ are $\lambda$. Then we have
$M(k,t,c_1)>M(k,t+1,c_2)$. 
\end{proposition}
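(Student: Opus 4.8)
The plan is to split the argument into two parts: first pin down the existence and location of $c_1$ and $c_2$ using the monotonicity recorded in Remark~\ref{rem:zero}, and then reduce the claimed inequality to a single clean algebraic identity. For the existence, I would invoke Remark~\ref{rem:zero}. For $T(k,t,c)$ the second eigenvalue $\lambda_2(c)$ decreases strictly in $c$, from $\mu^{(t-1)}$ as $c\to 0$ through $\lambda^{(t-1)}$ at $c=1$; hence for $\lambda\in(\lambda^{(t-1)},\mu^{(t-1)})$ there is a unique $c_1\in(0,1)$ whose second eigenvalue is $\lambda$. Applying the same remark with $t$ replaced by $t+1$ shows $\lambda_2$ sweeps the interval $(\lambda^{(t-1)},\mu^{(t)})$ as $c$ ranges over $(0,\infty)$; since $\mu^{(t-1)}<\mu^{(t)}$ (interlacing of the zeros of $F_{t-1}$ and $F_t$), we have $(\lambda^{(t-1)},\mu^{(t-1)})\subset(\lambda^{(t-1)},\mu^{(t)})$, so a unique $c_2>0$ also exists.

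Next I would extract explicit formulas for $c_1,c_2$ in terms of $\lambda$. Since $\lambda$ is the largest zero of $(c_1-1)G_{t-2}+G_{t-1}$ and of $(c_2-1)G_{t-1}+G_t$, and $G_i-G_{i-1}=F_i$, solving gives $c_1=-F_{t-1}(\lambda)/G_{t-2}(\lambda)$ and $c_2=-F_t(\lambda)/G_{t-1}(\lambda)$. I would abbreviate $u=G_{t-2}(\lambda)$, $w=G_{t-1}(\lambda)$, $a=-F_{t-1}(\lambda)$, $b=-F_t(\lambda)$. Because $\lambda$ exceeds the largest zeros of $G_{t-2}$ and $G_{t-1}$ we have $u,w>0$, and then positivity of $c_1,c_2$ forces $a,b>0$, with $c_1=a/u$ and $c_2=b/w$.

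For the inequality itself I would use the closed form $M(k,t,c)=G_{t-2}(k)+F_{t-1}(k)/c$ established in the proof of Theorem~\ref{bound}, together with the elementary facts $F_t(k)=(k-1)F_{t-1}(k)$ and $G_{t-1}(k)-G_{t-2}(k)=F_{t-1}(k)$. Substituting $1/c_1=u/a$ and $1/c_2=w/b$, the difference factors as
\[
M(k,t,c_1)-M(k,t+1,c_2)=F_{t-1}(k)\Bigl(\tfrac{u}{a}-(k-1)\tfrac{w}{b}-1\Bigr),
\]
and since $F_{t-1}(k)=k(k-1)^{t-2}>0$ it suffices to prove $ub-(k-1)wa-ab>0$.

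The crux, and the step I expect to be the main obstacle, is recognizing that this last expression collapses. Here I would feed in two recurrence identities evaluated at $\lambda$: from $G_{t-1}=G_{t-2}+F_{t-1}$ we get $w=u-a$, and from $(k-1)F_{t-1}=(k-1-x)G_{t-1}+G_t$, after using $G_t(\lambda)=w-b$, we get $b-(k-1)a=(k-\lambda)w$. Then $ub-ab=b(u-a)=bw$, so
\[
ub-(k-1)wa-ab=w\bigl(b-(k-1)a\bigr)=(k-\lambda)\,w^2,
\]
which is positive since $\lambda<\mu^{(t-1)}<2\sqrt{k-1}\le k$ and $w>0$. This gives the strict inequality. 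Once the two substitutions $w=u-a$ and $b-(k-1)a=(k-\lambda)w$ are spotted, everything else is routine bookkeeping; the difficulty is purely in seeing that the messy quantity telescopes to the single nonnegative term $(k-\lambda)w^2$.
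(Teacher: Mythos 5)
Your argument is correct and follows essentially the same route as the paper: both derive $c_1=-F_{t-1}(\lambda)/G_{t-2}(\lambda)$ and $c_2=-F_t(\lambda)/G_{t-1}(\lambda)$, expand $M(k,t,c_1)-M(k,t+1,c_2)=F_{t-1}(k)\bigl(1/c_1-1-(k-1)/c_2\bigr)$, and collapse it via the identities $G_{t-1}=G_{t-2}+F_{t-1}$ and $(k-1)F_{t-1}=(k-1-x)G_{t-1}+G_t$ to the same positive quantity $(k-\lambda)G_{t-1}(\lambda)^2$ times a positive factor. The only difference is cosmetic (you clear denominators where the paper keeps fractions) plus your explicit existence argument for $c_1,c_2$ via Remark~\ref{rem:zero}, which the paper leaves implicit.
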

\begin{proof}
Because $(c_1-1)G_{t-2}(\lambda)+G_{t-1}(\lambda)=0$, we get $c_1=-\frac{G_{t-1}(\lambda)-G_{t-2}(\lambda)}{G_{t-2}(\lambda)}=-F_{t-1}(\lambda)/G_{t-2}(\lambda)$. 
Similarly $c_2=-F_{t}(\lambda)/G_{t-1}(\lambda)$. Note that $F_{t-1}(\lambda)=-c_1 G_{t-2}(\lambda)<0$ and $F_{t}(\lambda)=-c_2G_{t-1}(\lambda)<0$. 
Therefore
\begin{align*}
M(k,t,c_1)-M(k,t+1,c_2) 
& =k(k-1)^{t-2}\big(\frac{1}{c_1}-1-\frac{1}{c_2}(k-1)\big)\\
&= k(k-1)^{t-2}\big(-\frac{G_{t-2}(\lambda)}{F_{t-1}(\lambda)}-1+(k-1)\frac{G_{t-1}(\lambda)}{F_{t}(\lambda)}\big)\\
&= k(k-1)^{t-2}\big(-\frac{G_{t-1}(\lambda)}{F_{t-1}(\lambda)}+(k-1)\frac{G_{t-1}(\lambda)}{F_{t}(\lambda)}\big)\\
&=\frac{k(k-1)^{t-2}G_{t-1}(\lambda)}{F_{t-1}(\lambda)F_{t}(\lambda)}\big(-F_{t}(\lambda)+(k-1)F_{t-1}(\lambda) \big)\\
&=\frac{k(k-1)^{t-2}(k-\lambda)G_{t-1}(\lambda)^2}{F_{t-1}(\lambda)F_{t}(\lambda)}>0. \qquad \qedhere
\end{align*}
\end{proof}
Table \ref{tab:1} shows the known examples attaining the bound $M(k,t,c)$. The incidence graphs of $PG(2,q)$, $GQ(q,q)$, and $GH(q,q)$ are known to be unique for $q\leq8$, $q\leq4$, and $q\leq2$, respectively (see, for example, \cite[Table 6.5 and the following comments]{BCN}). The incidence graphs of $PG(2,2)$, $GQ(2,2)$, and $GH(2,2)$ are the Heawood graph, the Tutte-Coxeter graph (or Tutte 8-cage), and the Tutte 12-cage, respectively.

\begin{table}[ht!]
\caption{Known graphs meeting the bound $M(k,t,c)$}\label{tab:1}
\begin{tabular}{|c|c|c|c|c|} 
\hline
$(k,\lambda)$ & $v(k,\lambda)$& Graph meeting bound& Unique?& Ref. \\
\hline 
$(2,2\cos(2\pi/n))$ & $n$ & $n$-cycle $C_n$& yes & \\
$(k,-1)$ &$k+1$ & Complete graph $K_{k+1}$ & yes & \\ 
$(k,0)$ & $2k$ & Complete bipartite graph $K_{k,k}$ & yes & \\
$(q+1,\sqrt{q})$ &$2(q^2+q+1) $& incidence graph of $PG(2,q)$ & ? & \cite{BCN,S66}\\
$(q+1,\sqrt{2q})$&$2(q+1)(q^2+1)$     &incidence graph of $GQ(q,q)$ & ? & \cite{B66,BCN}\\
$(q+1,\sqrt{3q})$ & $2(q+1)(q^4+q^2+1)$ &incidence graph of $GH(q,q)$& ? &\cite{B66,BCN} \\
$(3,1)$& $10$ & Petersen graph& yes & \cite{HS}\\
$(4,2)$& $35$ & Odd graph $O_4$& yes & \cite{M}\\
$(7,2)$& $50$ & Hoffman--Singleton graph& yes & \cite{HS}\\
$(5,1)$& $16$ & Clebsch graph & yes & \cite{GRb,S68}\\
$(10,2)$& $56$ & Gewirtz graph & yes &\cite{BH93,G69}\\
$(16,2)$& $77$ & $M_{22}$ graph & yes & \cite{B83,HS68} \\
$(22,2)$&$100$ &  Higman--Sims graph & yes & \cite{G69,HS68}\\
\hline
\end{tabular}
$PG(2,q)$: projective plane, $GQ(q,q)$: generalized quadrangle, \\
$GH(q,q)$: generalized hexagon, $q$: prime power 
\end{table}

The bounds in Table \ref{tab:1} solve several conjectures of Richey, Shutty, and Stover \cite{RSS}. Richey, Shutty, and Stover prove that $v(3,2)\leq105$, but they note that the largest 3-regular graph with $\lambda_2\leq 2$ they are aware of is the Tutte-Coxeter graph on 30 vertices. They conjectured that $v(3,2)=30$. They show that $v(4,2)\leq77$ and conjecture that the largest 4-regular graph with $\lambda_2\leq2$ is the so-called rolling cube graph on 24 vertices (that is, the bipartite double of the cuboctahedral graph which is the line graph of the $3$-cube). They also conjectured that $v(4,3)=27$ and the largest 4-regular graph with $\lambda_2\leq3$ is the Doyle graph on 27 vertices (see \cite{Doyle,Holt} for a description of this graph). In Table \ref{tab:1} we confirm that $v(3,2)=30$ and the Tutte-Coxeter graph (the incidence graph of $GQ(2,2)$) is, in fact, the unique graph which meets this bound (see \cite[Theorem 7.5.1]{BCN} for uniqueness). However, Table \ref{tab:1} shows that $v(4,2)=35$ (the Odd graph $O_4$) and that $v(4,3)=728$ (the incidence graph of $GH(3,3)$), disproving the latter two conjectures.

Since the order of a graph is an integer, $v(k,\lambda)$ can be bounded above by 
$\lfloor M(k,t,c) \rfloor$. The graphs meeting the bound $M(k,t,c)$ can be maximal under the assumption of a larger second eigenvalue. 
\begin{proposition} \label{prop:large}
Let $\lambda_1$, $\lambda_2$ be the second largest eigenvalues of $T(k,t+1,c_1)$ and $T(k,t,c_2)$, respectively. Suppose there exists a graph which attains the bound $M(k,t,c)$ of Theorem \ref{bound}. Then  
\begin{enumerate}
\item If $c=1$, then $v(k,\lambda_1)=
v(k,\lambda)$ for $c_1>k(k-1)^{t-1}$. 
Moreover if $M(k,t,c)$ is even, and $k$ is odd, then $v(k,\lambda_1)=v(k,\lambda)$ for $c_1>k(k-1)^{t-1}/2$. 
\item If $c>1$,  $v(k,\lambda_2)=
v(k,\lambda)$ for $c_2>c-c^2/(k(k-1)^{t-2}+c)$. 
Moreover if $M(k,t,c)$ is even, and $k$ is odd, then $v(k,\lambda_2)=
v(k,\lambda)$ for $c_2>c-2c^2/(k(k-1)^{t-2}+2c)$. 
\end{enumerate}
\end{proposition}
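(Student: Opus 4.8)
The plan is to sandwich the quantity $v(k,\lambda')$ (with $\lambda'$ standing for $\lambda_1$ in part $(1)$ and for $\lambda_2$ in part $(2)$) between a lower bound coming from the assumed extremal graph and the upper bound of Theorem~\ref{bound}, and then to choose the free parameter ($c_1$ or $c_2$) so that the integer part of the upper bound equals $M(k,t,c)$. Write $\lambda=\lambda_2(T(k,t,c))$ and let $G$ be the graph attaining the bound, so that $|V(G)|=M(k,t,c)$ is an integer and $\lambda_2(G)=\lambda$.

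First I would verify that the new second eigenvalue is at least $\lambda$, which guarantees that $G$ itself is admissible for the larger threshold and hence $v(k,\lambda')\geq M(k,t,c)$. In part $(1)$, where $c=1$ and $\lambda=\lambda^{(t-1)}$, Remark~\ref{rem:zero} applied to $T(k,t+1,\cdot)$ gives $\lim_{c_1\to\infty}\lambda_2(T(k,t+1,c_1))=\lambda^{(t-1)}$ together with strict monotonic decrease in $c_1$; hence $\lambda_1>\lambda^{(t-1)}=\lambda$ for every finite $c_1$. In part $(2)$, where $c>1$, the same monotonicity shows $\lambda_2=\lambda_2(T(k,t,c_2))>\lambda_2(T(k,t,c))=\lambda$ exactly when $c_2<c$, so the admissible range is $c-c^2/(k(k-1)^{t-2}+c)<c_2\leq c$ (with $c_2=c$ giving the trivial equality $\lambda_2=\lambda$).

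Next I would feed $\lambda'$ into Theorem~\ref{bound}, which is legitimate because $\lambda'$ is by construction the second eigenvalue of $T(k,t+1,c_1)$ (resp.\ $T(k,t,c_2)$), obtaining $v(k,\lambda')\leq M(k,t+1,c_1)$ (resp.\ $\leq M(k,t,c_2)$). Using the explicit formula \eqref{main}, one computes
\[
M(k,t+1,c_1)=M(k,t,1)+\frac{k(k-1)^{t-1}}{c_1},\qquad
M(k,t,c_2)-M(k,t,c)=k(k-1)^{t-2}\Bigl(\tfrac{1}{c_2}-\tfrac{1}{c}\Bigr).
\]
Requiring these excesses to be strictly less than $1$ yields exactly the stated thresholds $c_1>k(k-1)^{t-1}$ and $c_2>c-c^2/(k(k-1)^{t-2}+c)$; then $\lfloor M\rfloor=M(k,t,c)$, so the lower and upper bounds coincide and $v(k,\lambda')=M(k,t,c)=v(k,\lambda)$.

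Finally, for the parity refinements I would invoke the handshake lemma: when $k$ is odd every $k$-regular graph has an even number of vertices, so $v(k,\lambda')$ is even; since $M(k,t,c)$ is assumed even, it suffices that each excess be less than $2$ rather than $1$, which relaxes the thresholds to $c_1>k(k-1)^{t-1}/2$ and $c_2>c-2c^2/(k(k-1)^{t-2}+2c)$ after the identical algebra. The only genuinely delicate point is the first step---establishing $\lambda'\geq\lambda$ through the monotonicity and limiting behavior of $\lambda_2(\cdot)$ in Remark~\ref{rem:zero}---since everything afterward reduces to the bookkeeping encapsulated in \eqref{main} together with an integrality (and, in the second halves, a parity) argument.
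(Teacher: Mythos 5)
Your proof is correct and follows essentially the same route as the paper: sandwich $v(k,\lambda')$ between $M(k,t,c)$ (from the assumed extremal graph) and $\lfloor M(k,t+1,c_1)\rfloor$ resp.\ $\lfloor M(k,t,c_2)\rfloor$ (from Theorem \ref{bound}), then choose the threshold on $c_1$ or $c_2$ so the excess is below $1$ (or below $2$ in the parity-refined case). You are in fact slightly more careful than the paper in justifying the middle inequality $v(k,\lambda)\leq v(k,\lambda')$ via the monotonicity in Remark \ref{rem:zero}, a step the paper leaves implicit.
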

\begin{proof}
We show only (1) because (2) can be proved similarly.  For $c_1>k(k-1)^{t-1}$, we have
\[
M(k,t,c)=v(k,\lambda) \leq v(k,\lambda_1)\leq \lfloor M(k,t,c_1) \rfloor= M(k,t,c). 
\]
Therefore $v(k,\lambda) = v(k,\lambda_1)$. If $k$ is odd, $v(k,\lambda_1)$ must be even. For $c_1>k(k-1)^{t-1}/2$, we have
\[
M(k,t,c)=v(k,\lambda) \leq v(k,\lambda_1)\leq \lfloor M(k,t,c_1) \rfloor = M(k,t,c)+1. 
\]
Thus if $M(k,t,c)$ is even, then $v(k,\lambda) = v(k,\lambda_1)$. 
\end{proof}
The larger second eigenvalues in Proposition \ref{prop:large} are calculated in Table \ref{tab:2}. 
The graphs in Table \ref{tab:2} meet $v(k,\lambda)$ for any $\lambda_2\leq \lambda<\lambda'$, where $\lambda'$ is the largest zero of $f(x)$ in the table. 
\begin{table}[b!]
\caption{Graphs meeting $v(k,\lambda)$ for $\lambda_2 \leq \lambda<\lambda'$}\label{tab:2}
\begin{tabular}{|c|c|c|c|c|} 
\hline
 Graph &$t$&$c$&$f(x)$&$\lambda'$ \\
\hline 
  $K_{k+1}$ ($k$: even)&2 &1 &$x^2- \left(k-k^2\right)x+k^2-2 k$ &\\ 
  $K_{k+1}$  ($k$: odd)&2 &1 &$ 2 x^2-\left(k-k^2\right)x+k^2-3 k$ &\\ 
  $K_{k,k}$ ($k$: even)  & 3& $k$& $x^2-(1-k) x-1$&\\
  $K_{k,k}$ ($k$: odd)  & 3& $k$& $(k+1) x^2+\left(k^2-k\right) x-2 k$&\\
 $PG(2,q)$ ($q+1$: even) & $4$&$q+1$ &$\left(q^2+1\right) x^3+\left(q^3+q^2\right) x^2$&\\
& & & $+\left(-q^3-2 q-1\right) x-q^4-q^3$ & \\
 $PG(2,q)$ ($q+1$: odd) & $4$&$q+1$ & $+\left(q^2+2\right) x^3+\left(q^3+q^2\right) x^2$&\\
& & &$+\left(-q^3-4 q-2\right) x-q^4-q^3$ & \\
 $GQ(q,q)$ ($q+1$: even) &5 & $q+1$&$\left(-q^2+q-1\right) x^4-q^3 x^3$ &\\
& & &$+\left(2 q^3-2 q^2+2 q+1\right) x^2$ & \\ 
& & &$+2 q^4 x-q$ & \\
 $GQ(q,q)$ ($q+1$: odd) &5 & $q+1$&$\left(-q^3-2\right) x^4+\left(-q^4-q^3\right) x^3$ &\\
& & &$+\left(2 q^4+6 q+2\right) x^2+\left(2 q^5+2 q^4\right) x$ & \\
& & &$-2 q^2-2 q$ & \\
 
$GH(q,q)$ ($q+1$: even)& 7&$q+1$ &$\left(-q^4+q^3-q^2+q-1\right) x^6$ & \\
& & &$+\left(4 q^5-4 q^4+4 q^3-4 q^2+4 q+1\right) x^4$ & \\
& & &$+\left(-3 q^6+3 q^5-3 q^4+3 q^3-3 q^2-3 q\right) x^2$ & \\
& & &$-q^5 x^5+4 q^6 x^3-3 q^7 x+q^2$ & \\
 $GH(q,q)$ ($q+1$: odd)& 7&$q+1$ &$\left(-q^5-2\right) x^6+\left(-q^6-q^5\right) x^5$ & \\
& & &$+\left(4 q^6+10 q+2\right) x^4+\left(4 q^7+4 q^6\right) x^3$ & \\
& & & $+\left(-3 q^7-12 q^2-6 q\right) x^2$& \\
& & &$+\left(-3 q^8-3 q^7\right) x+2 q^3+2 q^2$ & \\
 Petersen &3& 1 &$x^3+ 12 x^2+ 7 x-24 $&1.11207\\
Odd graph $O_4$&4&2&$19 x^3+36 x^2-97 x-108$ & 2.02156\\
 Hoffman--Singleton & 3&1&$ x^3+ 126 x^2+ 113 x-756$&2.02845\\
 Clebsch   &3&2& $3 x^2+5 x-10$&$
1.1736 $\\
 Gewirtz  &3&2&$23 x^2+45 x-185$&$
2.02182$\\
 $M_{22}$  &3 &4&$61 x^2+240 x-736$&$
 2.02472$\\
  Higman--Sims  &3&6&$13x^2+77x-209$ &
$
2.0232$\\
\hline
\end{tabular}\\
$\lambda'$ is the largest zero of $f(x)$ 
\end{table}

By Theorem~\ref{bound}, we can obtain an 
alternative proof of the theorem due to 
Alon and Boppana, and Serre (see \cite{A,C,F,HLW,KS,LPS,Mo,Ni2,Ni1,Serre1} for more details).
\begin{corollary}[Alon--Boppana, Serre]
For given $k$, $\lambda<2 \sqrt{k-1}$, there exist finitely many $k$-regular graphs whose 
second largest eigenvalue is at most $\lambda$.  
\end{corollary}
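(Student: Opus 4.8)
The plan is to obtain finiteness as an immediate consequence of Theorem~\ref{bound}: I will produce, for each fixed $\lambda<2\sqrt{k-1}$, a single tridiagonal matrix $T(k,t,1)$ whose second-largest eigenvalue strictly exceeds $\lambda$, and then use the resulting Moore-type bound $M(k,t,1)$ to cap the number of vertices of every relevant graph. Since a $k$-regular graph on at most $N$ vertices is one of only finitely many graphs, it suffices to prove that $v(k,\lambda)$ is finite.

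First I would reduce to connected graphs, so that both $v(k,\lambda)$ and Theorem~\ref{bound} are applicable. If a $k$-regular graph $G$ is disconnected, then $k$ is an eigenvalue of multiplicity at least two, and hence $\lambda_2(G)=k$. As $2\sqrt{k-1}\le k$ for $k\ge 2$, we have $\lambda<2\sqrt{k-1}\le k$, so $\lambda_2(G)=k>\lambda$; thus no disconnected $k$-regular graph has $\lambda_2\le\lambda$, and every graph we must count is connected.

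The core of the argument is the choice of $t$. By Remark~\ref{rem:zero}, the second-largest eigenvalue of $T(k,t,1)$ is exactly $\lambda^{(t-1)}$, the largest zero of $G_{t-1}$. Using the description $\lambda^{(t-1)}=2\sqrt{k-1}\cos\theta$ with $\pi/t<\theta<\pi/(t-1)$ (see \cite{BIb}), together with the fact that the largest zeros of the orthogonal polynomials $G_i$ strictly increase with $i$ by interlacing, I get that $\lambda^{(t-1)}$ increases to $2\sqrt{k-1}$ as $t\to\infty$. Hence, given $\lambda<2\sqrt{k-1}$, I can choose $t$ with $\lambda<\lambda^{(t-1)}<2\sqrt{k-1}$. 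Every connected $k$-regular graph $G$ with $\lambda_2(G)\le\lambda$ then satisfies $\lambda_2(G)\le\lambda^{(t-1)}$, so Theorem~\ref{bound} applied to $T(k,t,1)$ yields $|V(G)|\le M(k,t,1)=1+\sum_{i=0}^{t-2}k(k-1)^i$, a finite bound independent of $G$. Combined with the monotonicity $v(k,\lambda)\le v(k,\lambda')$ for $\lambda\le\lambda'$, this gives $v(k,\lambda)\le M(k,t,1)<\infty$.

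I expect the only delicate point to be the limit $\lambda^{(t-1)}\to 2\sqrt{k-1}$: everything hinges on the Chebyshev-type location of the largest zero of $G_{t-1}$ forcing $\theta\to 0$ as $t\to\infty$. This is precisely the mechanism that breaks down at the threshold $\lambda=2\sqrt{k-1}$, consistent with the existence of infinite Ramanujan families there. The remaining ingredients---the reduction to connected graphs and the finiteness of the set of graphs of bounded order---are routine.
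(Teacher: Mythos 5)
Your proof is correct and follows essentially the same route as the paper: both pick $t$ large enough that the second eigenvalue $\lambda^{(t-1)}$ of $T(k,t,1)$ (the largest zero of $G_{t-1}$, tending to $2\sqrt{k-1}$ via the $2\sqrt{k-1}\cos\theta$ description) exceeds $\lambda$, and then invoke Theorem~\ref{bound} to get the Moore-type bound $M(k,t,1)$. Your added reduction to connected graphs is a harmless extra bit of care that the paper leaves implicit.
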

\begin{proof}
The second largest eigenvalue $\lambda_2(t)$ of $T(k,t,1)$ is equal to the largest zero of 
$G_{t-1}$.  The zero is expressed by
$\lambda_2(t)=2\sqrt{k-1} \cos \theta$, where $\theta$ is less than $\pi/(t-1)$ \cite[Section III.3]{BIb}. 
This implies that there exists a sufficiently large $t'$ such that $\lambda_2(t')>\lambda$.  
Therefore we have
\[
v(k, \lambda)\leq v(k, \lambda_2(t')) \leq 1+\sum_{i=0}^{t'-2}k(k-1)^i. \qquad \qedhere 
\]
\end{proof}


\section{Second largest eigenvalue $1$}\label{vk1}

In this section, we classify the graphs meeting $v(k,1)$. 
The complement of a regular graph with second eigenvalue at most $1$ has smallest eigenvalue at least $-2$. The structure of such graph is obtained from a subset of a root system, and it is characterized as a line graph except for sporadic examples \cite[Theorem 3.12.2]{BCN}. The following theorem is immediate by \cite[Theorem 3.12.2]{BCN}. 
\begin{theorem} \label{thm:BCN}
Let $G$ be a connected regular graph with $v$ vertices, valency $k$, and second largest eigenvalue 
 at most $1$. Then one of the following holds: 
\begin{enumerate}
\item $G$ is the complement of the line graph of a regular  
or a bipartite semiregular connected graph. \label{bcn1}
\item $v=2(k-1) \leq 28$, and $G$ is a subgraph of the complement of $E_7(1)$, switching-equivalent
to the line graph of a graph $\Delta$ on eight vertices, where all valencies of $\Delta$
have the same parity $($graphs nos.\ $1$--$163$ in Table~$9.1$ in {\rm \cite{BCS}}$)$. \label{bcn2}
\item $v=3(k-1) \leq 27$, and $G$ is a subgraph of the complement  of the Schl$\ddot{a}$fli graph $($graphs nos.\ $164$--$184$ in Table~$9.1$ in {\rm \cite{BCS}}$)$.  \label{bcn3}
\item $v=4(k-1) \leq 16$, and $G$ is a subgraph of the complement 
of the Clebsch graph  $($graphs nos.\ $185$--$187$ in Table~$9.1$ in {\rm \cite{BCS}}$)$. \label{bcn4}
\end{enumerate} 
\end{theorem}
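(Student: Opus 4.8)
The plan is to reduce the statement to the classical classification of connected graphs with smallest eigenvalue at least $-2$, apply that classification to the complement $\overline{G}$, and then translate the resulting cases back through complementation.

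First I would record the eigenvalue correspondence between $G$ and $\overline{G}$. Since $G$ is $k$-regular on $v$ vertices, the all-ones vector $\mathbf 1$ is an eigenvector for the eigenvalue $k$, and $\overline{A}=J-I-A$. Any eigenvector $x$ of $A$ orthogonal to $\mathbf 1$ satisfies $Jx=0$, hence $\overline{A}x=-(1+\lambda)x$ whenever $Ax=\lambda x$, while $\overline{A}\mathbf 1=(v-1-k)\mathbf 1$. Thus the eigenvalues of $\overline{G}$ are $v-1-k$ together with $\{-(1+\lambda_i):2\le i\le v\}$; since $v-1-k\ge -2$ automatically, $\lambda_{\min}(\overline{G})\ge -2$ holds if and only if $-(1+\lambda_2(G))\ge -2$, i.e.\ if and only if $\lambda_2(G)\le 1$. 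Note that $\overline{G}$ is again regular.

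Next I would invoke \cite[Theorem 3.12.2]{BCN}: every connected graph with smallest eigenvalue at least $-2$ is represented by a set of norm-$2$ vectors in a root lattice, and is therefore either a generalized line graph (represented inside a root system of type $D_n$) or one of the finitely many exceptional graphs (represented inside $E_8$). I would then use the regularity of $\overline{G}$ to cut down the generalized-line-graph alternative: a connected regular generalized line graph with smallest eigenvalue at least $-2$ is either the line graph $L(H)$ of a regular or bipartite semiregular connected graph $H$, or a cocktail-party graph, and the latter forces $G=\overline{L(H)}$ to be a disjoint union of edges, excluded by the connectivity and regularity of $G$ except in trivial cases. Complementing gives case \eqref{bcn1}, namely that $G$ is the complement of the line graph of a regular or bipartite semiregular connected graph.

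The exceptional alternative is where the bookkeeping lives. Here $\overline{G}$ is one of the reduced graphs enumerated in Table $9.1$ of \cite{BCS}, and these split into three blocks according to the minimal exceptional root subsystem supporting the representation. For each block I would read off from the table the regular graphs together with their parameters: the representation dimension bounds the number of vertices, yielding $v\le 28$, $v\le 27$, and $v\le 16$ respectively, while the common length and inner products of the representing vectors force the linear relations $v=2(k-1)$, $v=3(k-1)$, and $v=4(k-1)$. Identifying the ambient maximal graphs as (the complement of) $E_7(1)$, the Schl\"afli graph, and the Clebsch graph, and translating back through complementation, produces cases \eqref{bcn2}, \eqref{bcn3}, and \eqref{bcn4}; the switching-equivalence statement in \eqref{bcn2} comes directly from the line-graph structure on eight vertices of graphs nos.\ $1$--$163$. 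The genuine mathematical content is entirely carried by \cite[Theorem 3.12.2]{BCN} and the enumeration of \cite{BCS}; the step I expect to be most delicate is matching the three exceptional blocks to the precise parameter constraints $v=m(k-1)$ with $m\in\{2,3,4\}$ and the vertex bounds $28,27,16$, and confirming that complementation carries each listed regular representation into exactly one of the four described families with no overlap or omission.
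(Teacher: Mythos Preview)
Your proposal is correct and matches the paper's approach exactly: the paper does not give a standalone proof but states that the theorem is immediate from \cite[Theorem~3.12.2]{BCN} after observing (in the sentence preceding the statement) that the complement of a regular graph with $\lambda_2\le 1$ has $\lambda_{\min}\ge -2$. Your write-up simply unpacks this citation---the eigenvalue correspondence via $\overline{A}=J-I-A$, the application of the root-lattice classification to $\overline{G}$, and the translation of the generalized-line-graph and exceptional cases back through complementation using the tables in \cite{BCS}---which is precisely the intended derivation.
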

The following theorem shows the classification of graphs meeting $v(k,1)$. Note that this result will show that $v(k,1)=2k+2$ for $k$ large whereas Theorem \ref{bound} would give a larger upper bound for $v(k,1)$.
\begin{theorem}\label{main5}
Let $G$ be a connected $k$-regular graph with second largest eigenvalue at most $1$, with 
$v(k,1)$ vertices. 
Then the following hold:   
\begin{enumerate}
\item  $v(2,1) = 6$, and $G$ is the hexagon. \label{1}
\item  $v(3,1) = 10$,  and $G$ is the Petersen graph. \label{2}
\item  $v(4,1) = 12$, and $G$ is the complement of the graph no.\ $186$ in Table~$9.1$ in {\rm \cite{BCS}}. \label{3} 
\item  $v(5,1) = 16$, and $G$ is the Clebsch graph. \label{4}
\item $v(6,1)=15$, and $G$ is the complement of the line graph of the complete graph with $6$ vertices, or the complement of one of the graphs 
nos.\ $171$--$176$ in Table~$9.1$ in {\rm \cite{BCS}}.  \label{5}
\item $v(7,1)=18$,  and $G$ is the complement of one of the graphs nos.\ $177$--$180$ in Table~$9.1$ in {\rm \cite{BCS}}.  \label{6}
\item $v(8,1)=21$,  and $G$ is the complement of one of the graphs nos.\ $181$, $182$ in Table~$9.1$ in {\rm \cite{BCS}}.  \label{7}  
\item $v(9,1)=24$,  and $G$ is the complement of the graph no.\ $183$ in Table~$9.1$ in {\rm \cite{BCS}}. 
 \label{8} 
\item $v(10,1)=27$, and $G$ is the complement of the Schl$\ddot{a}$fli graph. \label{9}
\item $v(k, 1) = 2k +2$ for $k \geq 11$, and $G$ is the complement of the line graph of $K_{2,k+1}$.  \label{10}  
\end{enumerate}
\end{theorem}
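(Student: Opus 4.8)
The plan is to apply Theorem~\ref{thm:BCN}, which confines $G$ to one of four structural families, determine in each family the largest admissible order as a function of $k$, and then compare. The family in item~(\ref{bcn1}) is the generic one, and the heart of the argument is to pin down its extremal order and extremal graph; the three sporadic families in (\ref{bcn2})--(\ref{bcn4}) only contribute for bounded $k$ and are handled by consulting the explicit list in Table~9.1 of \cite{BCS}.

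First I would observe that in the line-graph family (\ref{bcn1}) the eigenvalue condition is automatic. For a $d$-regular graph $\Gamma$ on $N$ vertices the complement has second-largest eigenvalue $-1-\lambda_{\min}(\Gamma)$, and every line graph satisfies $\lambda_{\min}\geq-2$, so $\overline{L(H)}$ always has $\lambda_2\leq 1$ for $H$ regular or bipartite semiregular. Thus here we only maximize the order. Writing $G=\overline{L(H)}$, the order of $G$ equals the number of edges $e$ of $H$, and a valency count gives $k=e-(\text{valency of }L(H))-1$. I would then split into the two sub-cases. If $H$ is $r$-regular on $m$ vertices, then $L(H)$ has valency $2r-2$, so $v=e=k+2r-1$ with $m=4+2(k-1)/r$; the constraint $m\geq r+1$ reads $(r-1)(r-2)\leq 2k$, whence $r\leq(3+\sqrt{1+8k})/2$ and $v\leq k+2+\sqrt{1+8k}$, which is strictly below $2k+2$ exactly when $k\geq 9$ (the extremal regular examples come from $H=K_{r+1}$, e.g.\ $H=K_5$ giving the Petersen graph and $H=K_6$ giving $v=15$ at $k=6$). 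If $H$ is bipartite semiregular with degrees $r_1,r_2$, then $L(H)$ has valency $r_1+r_2-2$, so $v=e=k+r_1+r_2-1$, and $e\geq r_1r_2$ forces $(r_1-1)(r_2-1)\leq k$. Excluding $r_2=1$ (which makes $H$ a star and $G$ trivial), maximizing $r_1+r_2$ under this constraint yields $r_1+r_2\leq k+3$, with equality only at $r_1=k+1,\ r_2=2$, i.e.\ $H=K_{2,k+1}$, giving $v=2k+2$. Hence family (\ref{bcn1}) has maximum order $2k+2$, attained only by $\overline{L(K_{2,k+1})}$ once $k\geq 9$.

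I would then merge this with the sporadic families, which force $v=2(k-1)$, $3(k-1)$, $4(k-1)$ subject to $v\leq 28,\,27,\,16$, i.e.\ $k\leq 15,\,10,\,5$ respectively. For $k\geq 11$ only family (\ref{bcn2}) can occur, giving $2(k-1)<2k+2$, so $v(k,1)=2k+2$ with the unique extremal graph $\overline{L(K_{2,k+1})}$, which proves (\ref{10}). For $2\leq k\leq 10$ I would, for each $k$, take the maximum of $2k+2$, the regular-subcase value, and whichever of $2(k-1),3(k-1),4(k-1)$ is admissible. This selects the bipartite construction (the hexagon $\overline{L(K_{2,3})}$) for $k=2$; the regular graph $\overline{L(K_5)}$, the Petersen graph, for $k=3$; the graphs from family (\ref{bcn4}) (the value $4(k-1)$, namely the complement of graph no.\ $186$ and the Clebsch graph) for $k=4,5$; a tie between $\overline{L(K_6)}$ and family (\ref{bcn3}) at $k=6$; and the graphs from family (\ref{bcn3}) (the value $3(k-1)$, culminating in the complement of the Schl\"{a}fli graph at $k=10$) for $k=7,8,9,10$, matching items (\ref{1})--(\ref{9}).

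The hard part will be the finite-but-delicate analysis of the sporadic families for $2\leq k\leq 10$: one must work through the explicit list in Table~9.1 of \cite{BCS} (graphs nos.\ $1$--$187$) to decide, for each such $k$, exactly which complements are $k$-regular on the prescribed number $2(k-1)$, $3(k-1)$ or $4(k-1)$ of vertices, confirm that these attain $\lambda_2\leq 1$, identify them with the named graphs, and settle uniqueness — which holds for $k=3,4,5,9,10$ but fails for $k=6,7,8$, where several of the graphs nos.\ $171$--$182$ occur. By comparison, the uniqueness of $K_{2,k+1}$ in the bipartite subcase and the strict separation of the regular subcase from $2k+2$ for $k\geq 9$ are routine; the genuine bookkeeping lies entirely in the sporadic enumeration.
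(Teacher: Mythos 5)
Your proposal is correct, and its skeleton coincides with the paper's: both arguments rest entirely on the root-system classification in Theorem~\ref{thm:BCN}, maximize the order within each of the four families, and defer the sporadic cases $4\le k\le 10$ to the explicit list in Table~9.1 of \cite{BCS}. There are two genuine differences worth noting. First, for the regular subcase of family~(\ref{bcn1}) you derive the closed-form bound $v\le k+2+\sqrt{1+8k}$ from $m\ge r+1$, which makes the threshold $k\ge 9$ (beyond which this subcase cannot compete with $2k+2$) transparent in one line; the paper instead argues case by case with the parameters $(v,k,u,t)$, using $u\ge 8$ for $k\ge 11$ and a separate count $k\ge (t-1)(t-2)/2$ for $k\in\{4,6,\dots,10\}$. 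Your version is cleaner and buys a uniform statement, at the small cost of having to reinstate the integrality/divisibility conditions ($r\mid 2(k-1)$) when you actually enumerate for small $k$. Second, the paper does \emph{not} obtain items~(\ref{2}) and~(\ref{4}) from the classification at all: it applies the linear programming bound of Theorem~\ref{bound} with $T(k,3,(k-1)/2)$ to get $v(k,1)\le 3k+1$ and then quotes the known uniqueness of the Petersen and Clebsch graphs, whereas you recover the Petersen graph as $\overline{L(K_5)}$ (the unique equality case $m=r+1$ of your regular-subcase bound at $k=3$) and the Clebsch graph from family~(\ref{bcn4}) at $k=5$. Your route keeps the whole proof inside one framework; the paper's route avoids inspecting graphs nos.~$185$--$187$ for $k=5$. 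Both approaches carry the same residual bookkeeping burden for $k=4,6,7,8,9,10$, which you correctly identify as the delicate part; nothing in your outline would fail there, since the comparison of the candidate orders $2k+2$, $k+2+\sqrt{1+8k}$, $2(k-1)$, $3(k-1)\le 27$, $4(k-1)\le 16$ selects the same winners as the paper in every case.
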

\begin{proof}
(\ref{1}): A connected $2$-regular graph is an $n$-cycle, whose eigenvalues are $2\cos (2 \pi j /n)$ ($j=0,1,\ldots, n-1$). This implies (\ref{1}).

(\ref{2}), (\ref{4}):  By Theorem \ref{bound} for $T(k,3,(k-1)/2)$, we have $v(k,1)\leq 3k+1$. 
The two graphs are unique graphs attaining this bound (see \cite[Theorem~10.6.4]{GRb} and \cite{HS,N}).

(\ref{10}): The complement of the line graph of $K_{2,k+1}$ is of degree $k$ and has $2k+2$ vertices for
any $k$. We will prove that there exists no graph with at least $2k+2$ vertices except for these graphs for $k\geq 11$. 
In the case of Theorem \ref{thm:BCN} (\ref{bcn3}) (\ref{bcn4}),  we have no graph for $k\geq 11$. 
In the case of Theorem \ref{thm:BCN} (\ref{bcn2}), trivially $v=2(k-1)<2k+2$. We consider the case of 
Theorem \ref{thm:BCN} (\ref{bcn1}).  
Let $G$ be the complement of the line graph of a $t$-regular graph with $u$ vertices. Then $G$
is of degree $k=(u/2-2)t+1$, and has $v=ut/2$ vertices. Therefore $v=ut/2=u(k-1)/(u-4)\leq
2(k-1)< 2k+2$ because $u\geq 8$ for $k\geq 11$. 
Let $G$ be the complement of the line graph 
of a bipartite semiregular connected graph $(V_1,V_2,E)$. 
Let $|V_i|=u_i$
and  the degree of $x \in V_i$ be $t_i$, where we suppose $t_1\geq t_2$.  Then
$G$ is of degree $k=(u_1-1)t_1-t_2+1\geq (u_1-2)t_1+1$, and has $v=u_1t_1$ vertices. 
If $u_1=1$ holds, then $G$ has no edge.  
For $u_1>3$, it is satisfied that 
\begin{equation}\label{eq:v1}
v\leq\left(1+\frac{2}{u_1-2}\right)(k-1)\leq 2(k-1)<2k+2
\end{equation} 
for any $k$. 
For $u_1=3$, we have 
$t_2 \leq u_1 = 3$ and  
\begin{equation} \label{eq:v2}
v=3t_1=\frac{3}{2}(k+t_2-1) \leq \frac{3}{2}(k+2)<2k+2
\end{equation}
 for $k>2$. 
 For $u_1=2$, similarly $t_2 \leq u_1=2$ and
\begin{equation}\label{eq:v3}
 v=2 t_1=2(k+t_2-1) \leq 2k+2
\end{equation}
for any $k$, with equality only if  $t_1=k+1$, $t_2=2$, $u_1=2$ and $u_2=k+1$. 
Thus  (\ref{10}) holds. 

(3), (\ref{5})--(\ref{9}): 
Every candidate of maximal graphs comes from Theorem \ref{thm:BCN} (\ref{bcn3}) or (\ref{bcn4}) except for the case of the complete graph in (\ref{5}). We prove that there does not exist a
larger graph which comes from Theorem \ref{thm:BCN} (\ref{bcn1}). By inequalities \eqref{eq:v1}--\eqref{eq:v3}, the complement of 
the line graph of a bipartite semiregular graph is not maximal for $k>2$. 
 We consider the case of the complements of the line graphs of $t$-regular graphs with $u$ vertices. 
Since  
$v=k-1+2t$ is at least $12,15,18,21,24,27$, we have $u-1 \geq t\geq 5,5,6,7,8,9$ for $k=4,6,7,8,9,10$, respectively. 
Therefore $k=(u/2-2)t+1 \geq (t-2)(t-1)/2 \geq 6,6,10,15,21,28$ for $k=4,6,7,8,9,10$, 
respectively. 
The only parameter $(v,k,u,t)=(15,6,6,5)$ satisfies the conditions and it corresponds to the case of the complete graph in (\ref{5}). 
\end{proof}

\section{Other Values of $v(k,\lambda)$}

When no graph meets the bound given by Theorem \ref{bound}, other techniques may be necessary to find $v(k,\lambda)$. However, the bound is still useful in reducing the size of graphs which must be checked. In this section we describe several tools which we will use (Lemma \ref{subh1} and Lemma \ref{subh}), and then find $v(k,\lambda)$ in a few more cases (Proposition \ref{cubicsqrt2}, Proposition \ref{quarticsqrt5}, Proposition \ref{oneptnine}). 

Let $n(k,g)$ denote the minimum possible number of vertices of a $k$-regular graph with girth $g$. A $(k,g)$-cage is a graph which attains this minimum. The following lower bound on $n(k,g)$ due to Tutte \cite{WT} will be useful.
\begin{lemma}\label{gbound}
Define $n_l(k,g)$ by
\[
n_l(k,g)=\begin{cases}
\frac{k(k-1)^{(g-1)/2}-2}{k-2}&\mbox{if }g\mbox{ is odd,}\\
\frac{2(k-1)^{g/2}-2}{k-2}&\mbox{if }g\mbox{ is even.}\\
\end{cases}
\]
Then $n(k,g)\geq n_l(k,g)$.
\end{lemma}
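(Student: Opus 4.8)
The plan is to prove this as the classical Moore bound, by fixing a root, counting the vertices in successive distance-layers of a breadth-first neighborhood, and invoking the girth hypothesis to certify that all the counted vertices are genuinely distinct. The two parities are handled by choosing different roots: a single vertex when $g$ is odd, and an edge when $g$ is even. Throughout I assume $k\geq 3$ (for $k=2$ a $g$-cycle shows $n(2,g)=g$, which is the limiting value of both formulas).

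\medskip

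\noindent\emph{Odd girth.} Write $g=2r+1$, so $r=(g-1)/2$. Fix a vertex $x_0$ and let $N_i$ be the set of vertices at distance exactly $i$ from $x_0$. I claim $|N_0|=1$, $|N_1|=k$, and $|N_i|=k(k-1)^{i-1}$ for $1\le i\le r$. The point is that the ball of radius $r$ around $x_0$ induces a tree: if it contained a cycle, or if some vertex in $N_i$ (for $i\le r$) had two neighbors in $N_{i-1}$, one could splice together two shortest paths to $x_0$ and obtain a closed walk of length at most $2r=g-1$, hence a cycle shorter than $g$, contradicting the girth. Consequently each vertex in $N_{i-1}$ contributes exactly $k-1$ new neighbors to $N_i$ (all but the one edge leading back toward $x_0$), so the layer sizes grow by the factor $k-1$. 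Summing the geometric series gives
\[
n\ \geq\ 1+\sum_{i=1}^{r}k(k-1)^{i-1}
\ =\ 1+\frac{k\big((k-1)^{r}-1\big)}{k-2}
\ =\ \frac{k(k-1)^{(g-1)/2}-2}{k-2},
\]
which is exactly $n_l(k,g)$ in the odd case.

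\medskip

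\noindent\emph{Even girth.} Write $g=2r$, so $r=g/2$. Now fix an \emph{edge} $\{u,v\}$ as the root, and let $M_i$ be the set of vertices whose distance to the nearer endpoint of $\{u,v\}$ equals $i$. The same girth argument shows that the subgraph spanned by the vertices with $i\le r-1$ is a tree with two vertices at level $0$ (namely $u$ and $v$), so $|M_0|=2$ and $|M_i|=2(k-1)^{i}$ for $0\le i\le r-1$: a collision between the $u$-side and the $v$-side, or a repeated vertex, would produce a cycle of length at most $2r=g$ using the edge $\{u,v\}$, and a careful accounting shows such a cycle is in fact strictly shorter than $g$ unless it is trivial. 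Summing,
\[
n\ \geq\ \sum_{i=0}^{r-1}2(k-1)^{i}
\ =\ \frac{2\big((k-1)^{r}-1\big)}{k-2}
\ =\ \frac{2(k-1)^{g/2}-2}{k-2},
\]
matching $n_l(k,g)$ in the even case.

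\medskip

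The routine parts are the two geometric-sum simplifications. The step I expect to be the crux is the distinctness (tree-structure) claim: making precise why a coincidence among the counted vertices, or an extra edge inside the relevant neighborhood, forces a closed walk whose shortest enclosed cycle has length strictly below $g$. The even case is the more delicate one, since one must correctly track whether the edge $\{u,v\}$ is used and confirm that the resulting cycle length is $<2r$ rather than exactly $2r$; I would argue this by noting that two internally disjoint shortest paths from a collision vertex back to the root edge, together with at most the single edge $\{u,v\}$, bound a cycle of length at most $2r-1$ whenever the collision occurs at level $\le r-1$.
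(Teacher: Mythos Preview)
The paper does not actually prove this lemma: it simply attributes the bound to Tutte and cites \cite{WT}, treating it as a known result. So there is no ``paper's own proof'' to compare against.

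Your argument is the standard Moore-bound derivation and is essentially correct. The odd case is clean. In the even case your definition of $M_i$ as ``vertices whose distance to the nearer endpoint of $\{u,v\}$ equals $i$'' is slightly awkward---what you really want is the BFS tree rooted at the edge, with the $u$-side and $v$-side grown separately---but the counting and the collision analysis you give are right: a coincidence at level $i\le r-1$ within one side yields a cycle of length $\le 2(r-1)<g$, and a coincidence across the two sides yields, via the edge $\{u,v\}$, a closed walk of length $\le (r-1)+(r-1)+1=2r-1<g$. Your final paragraph correctly identifies this as the point needing care and gives the right bound. The geometric-sum simplifications are fine.
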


The following lemma is easily verified.
\begin{lemma}\label{specradlem}'
Each of the graphs in Figure \ref{specrad} has spectral radius greater than 2.
\end{lemma}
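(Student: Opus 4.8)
The statement to prove is Lemma \ref{specradlem}, asserting that each graph in Figure \ref{specrad} has spectral radius greater than $2$. The figure itself is not displayed in the excerpt, but the context makes the strategy clear: these are specific small graphs (almost certainly forbidden subgraphs, such as slightly-larger-than-path or slightly-larger-than-cycle configurations) that appear in the analysis of when a regular graph can have small second eigenvalue.

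\medskip

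The plan is to exploit the standard fact that the spectral radius is monotone under taking subgraphs and is bounded below by simple local quantities. First I would recall the two basic inequalities: (i) if $H$ is a subgraph of $G$, then $\rho(H) \le \rho(G)$, so it suffices to exhibit in each graph a subgraph of spectral radius exceeding $2$; and (ii) the spectral radius of any graph is at least its average degree and at least the square root of its maximum degree, since $\rho(G) \ge \sqrt{\Delta(G)}$ by considering the Rayleigh quotient on an indicator vector of a vertex of maximum degree together with its neighbors. The graphs of spectral radius exactly $2$ are completely classified: the connected graphs with $\rho = 2$ are precisely the cycles $C_n$ and the Smith graphs (the extended Dynkin diagrams $\widetilde{A}_n$, $\widetilde{D}_n$, $\widetilde{E}_6$, $\widetilde{E}_7$, $\widetilde{E}_8$), while any connected graph properly containing one of these as a subgraph has $\rho > 2$. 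This classification (due to Smith) is the cleanest route.

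\medskip

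Concretely, for each graph $\Gamma$ in Figure \ref{specrad} I would do one of the following, whichever is shortest. If $\Gamma$ contains a vertex of degree at least $5$, then $\rho(\Gamma) \ge \sqrt{5} > 2$ immediately. Otherwise, I would identify inside $\Gamma$ a connected subgraph that strictly contains one of the Smith graphs --- for instance a cycle with a pendant vertex, or a path/star configuration larger than $\widetilde{D}_n$ or $\widetilde{E}_n$ --- and invoke Smith's theorem to conclude $\rho > 2$. In the rare case where neither shortcut applies cleanly, I would simply compute the characteristic polynomial of $\Gamma$ (these are very small graphs) and check that it takes a negative value at $x = 2$, which forces a root larger than $2$ because $\rho$ is the largest root and the leading coefficient is positive; equivalently, exhibit a nonnegative vector $\mathbf{x}$ with $A\mathbf{x} \ge 2\mathbf{x}$ and strict inequality somewhere, giving $\rho > 2$ by Perron--Frobenius.

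\medskip

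The main obstacle is purely bookkeeping rather than conceptual: since the figure lists several distinct small graphs, I must verify the bound case by case and make sure the chosen subgraph or degree argument genuinely applies to each one as drawn. There is no hard analytic step --- the difficulty is only in reading off the correct local structure of each graph in Figure \ref{specrad} and selecting, for each, the most economical of the three arguments (high-degree vertex, Smith-graph containment, or direct evaluation of the characteristic polynomial at $2$). This is exactly why the authors call the lemma ``easily verified.''
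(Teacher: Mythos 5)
Your approach is correct, and in fact the paper supplies no proof at all---it simply declares the lemma ``easily verified''---so any complete argument here is a genuine addition rather than a parallel to the authors' text. Your Smith-classification route discharges both cases cleanly once the figure is read off: both graphs in Figure \ref{specrad} are trees, so the cycle and high-degree shortcuts are not the ones that apply (the maximum degrees are $3$ and $4$, so $\rho\ge\sqrt{\Delta}$ only gives $\rho\ge\sqrt{3}$ and $\rho\ge 2$). The seven-vertex tree in Figure \ref{specrad6} properly contains the extended Dynkin diagram $\widetilde{D}_5$ (a central edge with two pendant vertices at each end), and the six-vertex tree in Figure \ref{specrad7} properly contains $\widetilde{D}_4=K_{1,4}$; since each Smith graph has spectral radius exactly $2$ and each containment is proper inside a connected graph, Perron--Frobenius gives $\rho>2$ in both cases. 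One small caution: strict monotonicity of $\rho$ under proper subgraphs requires connectedness of the larger graph (which holds here), so you should say ``proper connected supergraph'' rather than just ``subgraph of spectral radius exceeding $2$'' when invoking item (i). With that caveat your argument is complete; the characteristic-polynomial fallback is not needed.
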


\setlength{\unitlength}{1.5pt}
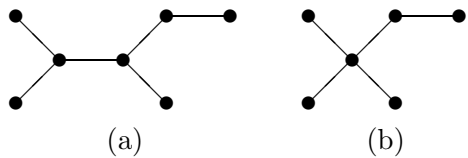
\begin{figure}[h!]
\centering
\begin{subfigure}[b]{.2\textwidth}
\centering
\begin{picture}(55,23)(0,0)
\put(0,0){\circle*{3}}
\put(11,11){\circle*{3}}
\put(0,22){\circle*{3}}
\put(27,11){\circle*{3}}
\put(38,0){\circle*{3}}
\put(38,22){\circle*{3}}
\put(54,22){\circle*{3}}
\put(0,0){\line(1,1){11}}
\put(11,11){\line(-1,1){11}}
\put(11,11){\line(1,0){16}}
\put(27,11){\line(1,1){11}}
\put(27,11){\line(1,-1){11}}
\put(38,22){\line(1,0){16}}
\end{picture}
\caption{\label{specrad6}}
\end{subfigure}
~
\begin{subfigure}[b]{.17\textwidth}
\centering
\begin{picture}(39,23)(0,0)
\put(0,0){\circle*{3}}
\put(11,11){\circle*{3}}
\put(0,22){\circle*{3}}
\put(22,22){\circle*{3}}
\put(22,0){\circle*{3}}
\put(38,22){\circle*{3}}
\put(0,0){\line(1,1){11}}
\put(11,11){\line(-1,1){11}}
\put(22,22){\line(1,0){16}}
\put(11,11){\line(1,1){11}}
\put(11,11){\line(1,-1){11}}
\end{picture}
\caption{\label{specrad7}}
\end{subfigure}
\caption{Graphs with spectral radius greater than 2.}\label{specrad}
\end{figure}

For a graph $G$, a vertex $v\in V(G)$, and a subset $U\subset V(G)$, define the distance $\dist(v,U)=\min_{u\in U}\dist(u,v)$. For an induced subgraph $H$ of $G$, let $\gamh i$ and $\gamhg i$ be the sets of vertices in $G$ at distance exactly $i$ and at least $i$ from $V(H)$ in $G$, respectively. Let $\rad G$ and $\deg G$ denote the spectral radius and average degree of $G$, respectively. Note that $\deg G\leq \rad G$.
\begin{lemma}\label{subh1}
Suppose $G$ is a connected, $k$-regular graph with second largest eigenvalue $\lambda_2(G)\leq\lambda<k$, and $H$ is an induced subgraph of $G$ with $\deg H\geq\lambda$. Then for the subgraph $K$ induced by $\gamhg2$ we have $\deg K\leq\lambda$, with equality only if $\deg H=\lambda_2(G)=\lambda$.
\end{lemma}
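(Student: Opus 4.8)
The plan is to apply eigenvalue interlacing to the quotient matrix of the partition $\pi=\{V(H),\gamh{1},\gamhg{2}\}$ of $V(G)$, exploiting the fact that there are no edges between $V(H)$ and $\gamhg{2}$ (vertices at distance at least $2$ from $H$). Write $X=V(H)$, $Y=\gamh{1}$, $Z=\gamhg{2}$, and let $B$ be the $3\times 3$ quotient matrix whose $(i,j)$ entry is the average number of neighbours in part $j$ of a vertex of part $i$. Since $G$ is $k$-regular every row of $B$ sums to $k$, so $k$ is the Perron eigenvalue of $B$; after the usual symmetrization $\tilde B=DBD^{-1}$ with $D=\mathrm{diag}(\sqrt{|X|},\sqrt{|Y|},\sqrt{|Z|})$, the matrix $\tilde B$ is symmetric with Perron eigenvector $w=(\sqrt{|X|},\sqrt{|Y|},\sqrt{|Z|})^{\top}$ for eigenvalue $k$. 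The two structural facts I would use are that the diagonal entries of $\tilde B$ are exactly the average degrees $\deg H$, $\deg Y$, $\deg K$ of the three induced subgraphs, and that the $(X,Z)$ entries of $\tilde B$ vanish because no edge joins $X$ to $Z$.

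By interlacing for quotient matrices (see \cite{BH}), $\lambda_2(\tilde B)\le\lambda_2(G)\le\lambda$; since this is strictly below $k$, the Perron eigenvalue $k$ is simple, so the variational principle gives $R(v):=v^{\top}\tilde B v/v^{\top}v\le\lambda_2(\tilde B)$ for every $v\perp w$. I would then test with $v=(\sqrt{|Z|},0,-\sqrt{|X|})^{\top}$, which is supported on the $X$- and $Z$-coordinates and is orthogonal to $w$. Because the off-diagonal $(X,Z)$ entries are zero, $v^{\top}\tilde B v=|Z|\,\deg H+|X|\,\deg K$ and $v^{\top}v=|X|+|Z|$, so
\[
\frac{|Z|\,\deg H+|X|\,\deg K}{|X|+|Z|}=R(v)\le\lambda_2(\tilde B)\le\lambda .
\]
Invoking the hypothesis $\deg H\ge\lambda$ to bound the $\deg H$ term from below then collapses this: $|X|\,\deg K\le\lambda(|X|+|Z|)-|Z|\,\deg H\le\lambda|X|$, and dividing by $|X|\ge 1$ yields $\deg K\le\lambda$. (If $Z=\varnothing$ there is nothing to prove.)

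For the equality clause, suppose $\deg K=\lambda$. Substituting into the displayed inequality gives $\lambda|Z|\ge|Z|\,\deg H$, hence $\deg H\le\lambda$, which together with $\deg H\ge\lambda$ forces $\deg H=\lambda$. But then the weighted average on the left equals $\lambda$, so the whole chain $\lambda\le R(v)\le\lambda_2(\tilde B)\le\lambda_2(G)\le\lambda$ is tight and in particular $\lambda_2(G)=\lambda$. This gives exactly $\deg H=\lambda_2(G)=\lambda$.

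The subtlety worth flagging is the choice of tool. A naive Cauchy interlacing applied to the principal submatrix $A[X\cup Z]=A_H\oplus A_K$ of the adjacency matrix itself does \emph{not} work, because the spectral-radius bound $\lambda_1(A_K)\ge\deg K$ points the wrong way and the second eigenvalue of the block-diagonal matrix is governed by $\min(\lambda_1(A_H),\lambda_1(A_K))$ rather than by $\deg K$. It is precisely the passage to the quotient matrix—whose diagonal records \emph{average} degrees instead of spectral radii—that makes the $\deg K$ term controllable, and the weighting $|Z|:|X|$ forced by orthogonality to the Perron vector is what lets the hypothesis $\deg H\ge\lambda$ cancel the unwanted $\deg H$ contribution. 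I expect identifying this weighting (rather than any deep estimate) to be the main obstacle.
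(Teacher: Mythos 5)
Your proof is correct, and while it begins exactly where the paper does --- the distance partition $\{V(H),\gamh1,\gamhg2\}$ and interlacing for its quotient matrix --- the way you extract the conclusion is genuinely different and noticeably cleaner. The paper writes down the explicit quadratic-formula expression for $\lambda_2(Q)$ and then argues by contradiction: it disposes of the case $\alpha,\beta>\lambda$ by Cauchy interlacing on the disjoint union $H+K$, and handles $\alpha>\beta=\lambda$ (and its mirror) by a case analysis on the sign of $\sqrt{\Delta}-(\gamma+\epsilon)$ that ends in $\epsilon\leq 0$. You instead symmetrize the quotient matrix and test the Rayleigh quotient on the single vector $(\sqrt{|Z|},0,-\sqrt{|X|})^{\top}$ orthogonal to the Perron vector; because the $(X,Z)$ entry vanishes, this produces in one line the weighted-average inequality
\[
\frac{|Z|\,\deg H+|X|\,\deg K}{|X|+|Z|}\;\leq\;\lambda_2(G)\;\leq\;\lambda,
\]
from which both the bound $\deg K\leq\lambda$ and the full equality analysis ($\deg H=\lambda_2(G)=\lambda$) follow by inspection. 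Your route buys a direct proof with no case analysis, and it isolates precisely where the hypothesis $\deg H\geq\lambda$ enters (to cancel the $|Z|$-weighted term); your closing remark correctly identifies why naive Cauchy interlacing on $A_H\oplus A_K$ cannot work. The only details worth making explicit are that $k$ is indeed the largest eigenvalue of the quotient matrix (immediate from the constant row sum, or from interlacing against $\lambda_1(G)=k$, and needed for the variational bound $R(v)\leq\lambda_2(\tilde B)$ on $w^{\perp}$), and the degenerate configurations: $\gamhg2=\varnothing$ you already dismiss, and $\gamh1=\varnothing$ with $\gamhg2\neq\varnothing$ is impossible since $G$ is connected. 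Both are routine.
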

\begin{proof}
Consider the quotient matrix $Q$ of the partition $\{V(H),\gamh1,\gamhg2\}$ of $V(G)$. We have
\[
Q=\begin{pmatrix}
\alpha & k-\alpha & 0\\
\gamma & k-(\gamma+\epsilon) & \epsilon\\
0 & k-\beta & \beta
\end{pmatrix},
\]
where $\alpha=\deg H$, $\beta=\deg K$, and $\gamma$ and $\epsilon$ are the average numbers of neighbors in $H$ and $K$, respectively, of the vertices in $\gamh1$. The eigenvalues of $Q$ interlace those of $G$ (see \cite[Corollary~2.5.4]{BH}), so we must have $\lambda_2(Q)\leq\lambda_2(G)\leq\lambda$. It is straightforward to verify that $\lambda_1(Q)=k$ and
\be\label{l2}
\lambda_2(Q)=\frac12\left(\alpha+\beta-(\gamma+\epsilon)+\sqrt{\Delta}\right),
\ee
where $\Delta=(\alpha+\beta-(\gamma+\epsilon))^2-4(\alpha\beta-\beta\gamma-\alpha\epsilon)$. By hypothesis we have $\alpha\geq\lambda$. If also $\beta\geq\lambda$, then we find that $\alpha=\beta=\lambda_2(Q)=\lambda$, as we will prove below.

Indeed, if both $\alpha>\lambda$ and $\beta>\lambda$, then by Cauchy interlacing \cite[Proposition~3.2.1]{BH} $\lambda_2(G)\geq\lambda_2(H+K)>\lambda$, where $H+K$ is the disjoint union of $H$ and $K$, a contradiction. Suppose $\alpha\geq\lambda$ and $\beta\geq\lambda$. If $\alpha=\beta=\lambda$, then \eqref{l2} becomes $\lambda_2(Q)=\lambda$. Otherwise we must have $\alpha>\beta=\lambda$ or $\beta>\alpha=\lambda$. If $\sqrt{\Delta}\geq \gamma+\epsilon$, then clearly $\lambda_2(Q)>\lambda$, a contradiction. If $\sqrt{\Delta}< \gamma+\epsilon$, then $\Delta< (\gamma+\epsilon)^2$, which implies $(\alpha-\beta)(\alpha-\beta+2(\epsilon-\gamma))<0$. Thus we have either $\alpha>\beta$ and $\epsilon<\gamma-\frac12(\alpha-\beta)$, or $\beta>\alpha$ and $\gamma<\epsilon-\frac12(\beta-\alpha)$. Suppose the former is true. Then $\beta=\lambda$ and we can write $\alpha=\beta+s=\lambda+s$ and $\epsilon=\gamma-\frac{s}{2}-t$ for some $s,t>0$. Then \eqref{l2} becomes
\[
\lambda_2(Q)=\frac14\left(4\lambda-4\gamma+3s+2t+\sqrt{\Delta'}\right),
\]
where $\Delta'=16\gamma^2+(s-2t)^2-8\gamma(s+2t)$. If $\sqrt{\Delta'}>4\gamma-3s-2t$, then clearly $\lambda_2(Q)>\lambda$, a contradiction. If $\sqrt{\Delta'}\leq4\gamma-3s-2t$, then $\Delta'\leq(4\gamma-3s-2t)^2$, which implies $\gamma\leq\frac{s}{2}+t$. However, this implies $\epsilon=\gamma-\frac{s}{2}-t\leq0$, a contradiction. If $\beta>\alpha$ and $\gamma<\epsilon-\frac12(\beta-\alpha)$, the same argument holds (simply swap the roles of $\alpha$ and $\beta$ and of $\gamma$ and $\epsilon$ in the above argument). Thus we cannot have $\alpha\geq\lambda$ and $\beta\geq\lambda$ unless $\alpha=\beta=\lambda$, so we must have $\beta<\lambda$ or $\alpha=\beta=\lambda_2(Q)=\lambda$.
\end{proof}
\begin{lemma}\label{subh}
Suppose $G$ is a connected, $k$-regular graph with second largest eigenvalue $\lambda_2(G)\leq\lambda<k$. If $G$ contains an induced subgraph $H$ on $s$ vertices with $t$ edges and either $\deg H\geq\lambda$ or $\rad H>\lambda$, then
\be\label{vertbound}
\abs{V(G)}\leq s+\frac{2k-\lambda-1}{k-\lambda}(ks-2t).
\ee
\end{lemma}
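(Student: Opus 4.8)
The plan is to partition the vertex set as $V(G)=V(H)\cup\gamh1\cup\gamhg2$ and to bound the two outer cells by incidence counting, the crucial input being that the subgraph $K$ induced on $\gamhg2$ satisfies $\deg{K}\leq\lambda$. Write $m=ks-2t$ for the number of edges between $V(H)$ and $\gamh1$ (each of the $s$ vertices of $H$ contributes $k$ incidences, of which exactly $2t$ stay inside $H$), and set $b=\abs{\gamh1}$ and $d=\abs{\gamhg2}$, so that $\abs{V(G)}=s+b+d$.

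First I would establish $\deg{K}\leq\lambda$ under either hypothesis. If $\deg{H}\geq\lambda$, this is exactly the conclusion of Lemma~\ref{subh1}. If instead $\rad{H}>\lambda$, note that no vertex of $K$ is adjacent to any vertex of $H$ (every vertex of $K$ is at distance at least $2$ from $V(H)$), so the disjoint union $H+K$ is an induced subgraph of $G$ and its spectrum is the union of the spectra of $H$ and $K$. Were $\deg{K}>\lambda$, then $\rad{K}\geq\deg{K}>\lambda$, so both $\rad{H}$ and $\rad{K}$ would be eigenvalues of $H+K$ exceeding $\lambda$, forcing $\lambda_2(H+K)>\lambda$; but Cauchy interlacing \cite[Proposition~3.2.1]{BH} gives $\lambda_2(G)\geq\lambda_2(H+K)$, contradicting $\lambda_2(G)\leq\lambda$. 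Hence $\deg{K}\leq\lambda$ in this case as well.

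With $\deg{K}\leq\lambda$ in hand, I would finish by double counting. Every vertex of $\gamh1$ has a neighbor in $H$, so $b\leq m$. Counting the edges between $\gamh1$ and $\gamhg2$ from the $K$ side, each vertex of $K$ sends $k$ edges, of which $\deg{K}$ on average remain inside $K$ and the rest must land in $\gamh1$ (since $K$ has no neighbor in $H$); this gives at least $(k-\deg{K})d\geq(k-\lambda)d$ such edges. Counting them from the $\gamh1$ side gives at most $kb-m$. Combining with $b\leq m$ yields $(k-\lambda)d\leq kb-m\leq(k-1)m$, so $d\leq (k-1)m/(k-\lambda)$ (here $k-\lambda>0$ since $\lambda<k$). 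Therefore
\be
\abs{V(G)}=s+b+d\leq s+m+\frac{(k-1)m}{k-\lambda}=s+\frac{2k-\lambda-1}{k-\lambda}\,m,
\ee
which is \eqref{vertbound} after substituting $m=ks-2t$. The only delicate point is the reduction of the spectral-radius hypothesis to the degree bound on $K$: one must exploit that $H$ and $K$ lie at distance at least $2$ so that $H+K$ is genuinely induced, and then read off $\lambda_2(H+K)$ from the disjoint spectra in order to apply interlacing; the remaining edge counting is routine.
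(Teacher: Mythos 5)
Your proposal is correct and follows essentially the same route as the paper: the same partition $V(H)\cup\gamh1\cup\gamhg2$, the same appeal to Lemma~\ref{subh1} in the $\deg H\geq\lambda$ case and to interlacing on the induced disjoint union $H+K$ in the $\rad H>\lambda$ case, and the same two-sided count of the edges between $\gamh1$ and $\gamhg2$. The only cosmetic difference is that you bound those edges by $kb-m$ where the paper uses $(k-1)\abs{\gamh1}$; both reduce to $(k-1)(ks-2t)$ via $\abs{\gamh1}\leq ks-2t$ and yield the identical final bound.
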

\begin{proof}
Since $G$ is $k$-regular, there are $ks-2t$ edges from $H$ to $\gamh1$, which implies $\abs{\gamh1}\leq ks-2t$. We will show that $\abs{\gamhg2}\leq \frac{k-1}{k-\lambda}\abs{\gamh1}$, which completes the proof that \eqref{vertbound} holds.

First, note that each vertex in $\gamh1$ has a neighbor in $H$, so each such vertex has at most $k-1$ neighbors in $\gamhg2$. Then there are at most $(k-1)\abs{\gamh1}$ edges from $\gamh1$ to $\gamhg2$. If $\deg H\geq\lambda$ then by Lemma \ref{subh1} we have $\deg K\leq\lambda$, where $K$ is the subgraph induced by $\gamhg2$. If not, then $\rad H>\lambda$, so $\rad K\leq\lambda$ (and so also $\deg K\leq\lambda$) by eigenvalue interlacing. Since $G$ is $k$-regular, this implies that the average number of neighbors in $\gamh1$ of the vertices in $\gamhg2$ is at least $k-\lambda$, so there are at least $(k-\lambda)\abs{\gamhg2}$ edges from $\gamhg2$ to $\gamh1$. This completes the proof.
\end{proof}

\begin{proposition}\label{cubicsqrt2}
If $G$ is a connected, 3-regular graph with $\lambda_2(G)>1$, then $\lambda_2(G)\geq\sqrt{2}$, with equality if and only if $G$ is the Heawood graph.
\end{proposition}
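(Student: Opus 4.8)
The plan is to prove the following sharpened statement, which contains both halves of the proposition: if $G$ is connected and cubic with $\lambda_2(G)\le\sqrt2$, then either $\lambda_2(G)\le1$, or $G$ is the Heawood graph and $\lambda_2(G)=\sqrt2$. The Heawood graph is the incidence graph of $PG(2,2)$, hence distance-regular with quotient matrix $T(3,4,3)$ and $M(3,4,3)=1+3+6+4=14$. So the first step is to apply Theorem~\ref{bound}: every connected cubic graph with $\lambda_2\le\sqrt2$ has at most $14$ vertices, with equality if and only if it is the Heawood graph. If $|V(G)|=14$ we are done, and otherwise, since $G$ is cubic its order is even, so we may assume $|V(G)|\le12$.

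The second step is to shrink the remaining finite family using girth. By Lemma~\ref{gbound} a cubic graph of girth at least $6$ has at least $n_l(3,6)=\frac{2\cdot2^3-2}{1}=14$ vertices, so every cubic graph of order at most $12$ automatically has girth at most $5$; this is exactly why $\sqrt2$, the value attained by the $(3,6)$-cage, is the correct threshold. If $G$ contains a triangle, I apply Lemma~\ref{subh} with $H=K_3$, taking $\lambda=\sqrt2$ (legitimate since $\lambda_2(G)\le\sqrt2<3=k$ and $\rho(K_3)=2>\sqrt2$): here $s=3$, $t=3$, so $|V(G)|\le 3+\tfrac{5-\sqrt2}{3-\sqrt2}\cdot 3<10$, whence $|V(G)|\le8$. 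Thus only two small families survive: cubic graphs of girth $4$ or $5$ on at most $12$ vertices, and cubic graphs of girth $3$ on at most $8$ vertices.

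The final step is a finite verification over this list: for each such graph I would compute $\lambda_2$ and check that it is either at most $1$ or strictly larger than $\sqrt2$. The graphs with $\lambda_2\le1$ are precisely those already controlled by $v(3,1)=10$ in Theorem~\ref{main5} (the Petersen graph being the unique extremal one), so the real content is to confirm that each remaining graph overshoots $\sqrt2$. This simultaneously yields the spectral gap (no cubic graph has $\lambda_2\in(1,\sqrt2)$) and, because the inequality is strict on all orders at most $12$, shows that $\lambda_2=\sqrt2$ can occur only when $|V(G)|=14$, which by the uniqueness clause of Theorem~\ref{bound} forces $G$ to be the Heawood graph.

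I expect the last step to be the genuine obstacle. The linear-programming bound alone cannot close the gap: for a $12$-vertex cubic graph the relation $12\le M(3,4,c)=10+12/c$ only gives $c\le6$, and since $\lambda_2(T(3,4,c))$ is decreasing in $c$ (Remark~\ref{rem:zero}), it forces $\lambda_2(G)\ge\lambda_2(T(3,4,6))$, i.e.\ the largest zero of $(c-1)G_2+G_3=x^3+6x^2+x-12$ at $c=6$, which is only about $1.22$, well short of $\sqrt2\approx1.414$. Moreover, in girth $4$ or $5$ the available induced subgraphs are too sparse for Lemma~\ref{subh} to improve on $|V(G)|\le12$, so the triangle reduction has no analogue there. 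Hence the verification is essentially computational (a search over the connected cubic graphs of order at most $12$ with girth $4$ or $5$, together with the triangle-containing graphs of order at most $8$), and the delicate point is establishing \emph{strict} inequality $\lambda_2>\sqrt2$ rather than mere avoidance of the open interval, since it is strictness that isolates the Heawood graph as the unique graph with $\lambda_2=\sqrt2$.
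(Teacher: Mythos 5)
Your reduction is exactly the paper's: Theorem~\ref{bound} with $T(3,4,3)$ gives $v(3,\sqrt2)=14$ with the Heawood graph as the unique extremal graph, Lemma~\ref{gbound} forces girth at most $5$ on at most $12$ vertices, and Lemma~\ref{subh} applied to a triangle pushes the girth-$3$ case down to at most $8$ vertices (your arithmetic $\frac{6}{7}(10+\sqrt2)\approx 9.78$ matches the paper's). Where you diverge is the endgame. You declare the remaining finite verification ``essentially computational'' and would resolve it by machine; the paper instead finishes entirely by hand. It takes $H$ to be a shortest cycle $C_m$, $m\in\{3,4,5\}$, forms the equitable-in-the-average partition $\{V(H),\Gamma_1(H),\Gamma_{\geq2}(H)\}$ with its $3\times3$ quotient matrix $Q$, and uses quotient interlacing ($\lambda_2(Q)\leq\lambda_2(G)$) to dispatch each residual pair (girth, order): the entries of $Q$ are pinned down by edge-counting and the girth constraint (e.g.\ for girth $5$ on $12$ vertices one must have $|\Gamma_1(H)|=5$, $\gamma=1$, leaving only two subcases according to whether $\Gamma_{\geq2}(H)$ spans an edge), and in every case $\lambda_2(Q)$ evaluates to either something $>\sqrt2$ or to a known small graph ($K_4$, $C_3\square K_2$, $K_{3,3}$, $Q_3$, Petersen, etc.) with $\lambda_2\leq 1$. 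So your claim that the sparse subgraphs in girth $4$ and $5$ leave no non-computational route is too pessimistic: the same three-part distance partition that underlies Lemma~\ref{subh1} closes every case without a search. Your plan is still valid --- the list you reduce to is finite and small, and a computer check of it would prove the proposition, including the strictness needed to isolate the Heawood graph --- but it trades a short hand argument for a computation, which is precisely what the authors avoid here (in contrast to Propositions~\ref{quarticsqrt5} and~\ref{oneptnine}, where they do resort to computer search).
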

\begin{proof}
We have already seen in Table \ref{tab:1} that $v(3,\sqrt{2})=14$ and the Heawood graph (the incidence graph of $PG(2,2)$) is the unique graph meeting this bound. Thus we only need to show that no 3-regular graph has second eigenvalue between 1 and $\sqrt{2}$. Suppose $G$ is a 3-regular graph with $1<\lambda_2(G)<\sqrt{2}$. We will show that this yields a contradiction. We have immediately that $\abs{V(G)}<14$. Since $G$ is 3-regular, this implies $\abs{V(G)}\leq12$.

We note that the average degree of any cycle is $2>\sqrt{2}>\lambda_2(G)$. If $G$ has girth 3, then Lemma \ref{subh} implies $\abs{V(G)}\leq\frac{6}{7}(\sqrt{2}+10)\approx9.78$. Since $G$ is 3-regular, this implies $\abs{V(G)}\leq8$. Lemma \ref{gbound} implies that a graph with girth more than 5 has at least 14 vertices, so $G$ has girth at most 5.

We partition the vertices of $G$ by $P_1=\{V(H),\gamh1,\gamhg2\}$, where $H$ is a subgraph of $G$ isomorphic to $C_m$, where $m\in \{3,4,5\}$ is the girth of $G$. This partition has quotient matrix $Q$ given by
\[
Q=\begin{pmatrix}
2 & 1 & 0\\
\gamma & 3-(\alpha+\gamma) & \alpha\\
0 & \beta & 3-\beta
\end{pmatrix},
\]
where $\gamma\abs{\gamh1}=m$ (by counting edges from $H$ to $\gamh1$) and $\alpha\abs{\gamh1}=\beta\abs{\gamhg2}$ (by counting edges from $\gamh1$ to $\gamhg2$).

We first suppose $G$ has girth 3. Then $4\leq\abs{V(G)}\leq8$. If $\abs{V(G)}=4$, then $G\cong K_4$, and we have $\lambda_2(G)=-1$. If $\abs{V(G)}=6$, it is straightforward to show that $G\cong C_3\square K_2$, where $\square$ denotes the graph Cartesian product, and we have $\lambda_2(G)=1$. Either case is a contradiction.

If $\abs{V(G)}=8$ then $\gamh1$ has 2 or 3 vertices. If $\abs{\gamh1}=2$, then we have $\abs{\gamhg2}=3$, $\gamma=3/2$, and depending on whether there is an edge in $\gamh1$ or not we have
$\alpha=1/2$ or $3/2$, $\beta=1/3$ or 1, and $\lambda_2(Q)=\frac{1}{3}(\sqrt{13}+4)\approx2.54$ or 2, respectively. Either case is a contradiction.
If $\abs{\gamh1}=3$, then $\abs{\gamhg2}=2$, $\gamma=1$, and  
depending on whether there is an edge in $\gamhg2$ or not we have $\beta=2$ or $3$, $\alpha=4/3$ or 2, and $\lambda_2(Q)=5/3$ or $\frac{1}{2}(\sqrt{17}-1)\approx1.56$, respectively. Either case is a contradiction. Thus $G$ cannot have girth 3.

Suppose $G$ has girth 4. Then we have $6\leq\abs{V(G)}\leq12$. If $\abs{V(G)}=6$, then $G\cong K_{3,3}$ and we have $\lambda_2(G)=0$. If $\abs{V(G)}=8$, then it is straightforward to verify that $G$ must either be the 3-cube $Q_3$ or the graph in Figure \ref{almostcube}.
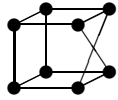
\begin{figure}[h!]
\centering
\begin{picture}(25,22)(0,0)
\put(0,0){\circle*{3}}
\put(16,0){\circle*{3}}
\put(0,16){\circle*{3}}
\put(16,16){\circle*{3}}
\put(8,4){\circle*{3}}
\put(24,4){\circle*{3}}
\put(8,20){\circle*{3}}
\put(24,20){\circle*{3}}
\put(0,0){\line(1,0){16}}
\put(0,0){\line(0,1){16}}
\put(16,16){\line(-1,0){16}}
\put(16,16){\line(2,-3){8}}
\put(0,0){\line(2,1){8}}
\put(16,0){\line(2,1){8}}
\put(0,16){\line(2,1){8}}
\put(16,16){\line(2,1){8}}
\put(8,4){\line(1,0){16}}
\put(8,4){\line(0,1){16}}
\put(24,20){\line(-1,0){16}}
\put(16,0){\line(2,5){8}}
\end{picture}
\caption{A 3-regular graph on 8 vertices with girth 4.}
\label{almostcube}
\end{figure}
In either case we have $\lambda_2(G)=1$, a contradiction. If $\abs{V(G)}=10$, then $\gamh1$ has 2, 3, or 4 vertices. If $\abs{\gamh1}=2$, then $\abs{\gamhg2}=4$, $\gamma=2$, $\alpha=1$, $\beta=1/2$, and
$\lambda_2(Q)=\frac14(\sqrt{41}+3)\approx2.35$, a contradiction. If $\abs{\gamh1}=3$, then $\abs{\gamhg2}=3$, $\gamma=4/3$, and 
$\alpha=\beta$. Then $\alpha\leq5/3$ (since $3-(\alpha+\gamma)\geq0$) implies $\beta\leq5/3$, which implies $\gamhg2$ has at least 2 edges. Since $G$ has girth 4, $\gamhg2$ cannot have 3 edges, so $\gamhg2$ has exactly 2 edges, $\alpha=\beta=5/3$, and $\lambda_2(Q)=\frac12(\sqrt{241}+7)\approx1.88$, a contradiction. If $\abs{\gamh1}=4$, then $\abs{\gamhg2}=2$, $\gamma=2$, and 
depending on whether there is an edge in $\gamhg2$ or not we have $\beta=2$ or 3, $\alpha=1$ or $3/2$, and $\lambda_2(Q)=\frac12(\sqrt{5}+1)\approx1.62$ or 3/2, respectively. Either case is a contradiction. If $\abs{V(G)}=12$, then $\gamh1$ must be a coclique on 4 vertices (otherwise there are at most 6 edges from $\gamh1$ to $\gamhg2$, so Lemma \ref{subh1} implies $\abs{\gamhg2}<6/(3-\sqrt{2})\approx3.78$, which implies $\abs{V(G)}<11.78$, a contradiction). Then we have $\abs{\gamh1}=\abs{\gamhg2}=4$, $\gamma=1$, 
 $\alpha=\beta=2$, and
 $\lambda_2(Q)=\sqrt{3}$. This is a contradiction, so $G$ cannot have girth 4.

Suppose $G$ has girth 5. Then $10\leq\abs{V(G)}\leq12$. The Petersen graph with 10 vertices and $\lambda_2=1$ is the unique $(3,5)$-cage (see \cite{HS}), so $G$ must have 12 vertices. Note we must have $\abs{\gamh1}=5$ and $\gamma=1$, since vertices in $H$ cannot have common neighbors outside of $H$. Since $\abs{V(G)}=12$, we have $\abs{\gamhg2}=2$, and 
depending on whether there is an edge in $\gamhg2$ or not we have $\beta=2$ or 3, $\alpha=4/5$ or $6/5$, and $\lambda_2(Q)=\frac15(2\sqrt{6}+3)\approx1.58$ or $\frac{1}{10}(\sqrt{241}-1)\approx1.45$, respectively. Either case is a contradiction.

Thus $G$ cannot exist as described, which completes the proof.
\end{proof}

\begin{proposition}\label{quarticsqrt5}
If $G$ is a connected, 4-regular graph with $\lambda_2(G)>1$, then $\lambda_2(G)\geq\sqrt{5}-1$, with equality if and only if $G$ is either the graph in Figure \ref{sqrt5} or the circulant graph $\mbox{Ci}_{10}(1,4)$ (the Cayley graph of $(\zee_{10},+)$ with generating set $\{\pm 1,\pm 4\}$).
\end{proposition}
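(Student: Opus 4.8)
The plan is to follow the template of the proof of Proposition~\ref{cubicsqrt2}. First I would verify by direct computation that both extremal graphs are $4$-regular on $10$ vertices with $\lambda_2=\sqrt5-1$: the eigenvalues of $\mbox{Ci}_{10}(1,4)$ are $2\cos(\pi j/5)+2\cos(4\pi j/5)$ for $j=0,\dots,9$, which gives spectrum $4,\,(\sqrt5-1)^2,\,0^5,\,(-1-\sqrt5)^2$, and the graph of Figure~\ref{sqrt5} is treated the same way. This settles the ``if'' half of the equality statement. For everything else it suffices to assume $G$ is connected and $4$-regular with $1<\lambda_2(G)\le\sqrt5-1$ and to show that $G$ must be one of these two graphs; this simultaneously excludes any $\lambda_2\in(1,\sqrt5-1)$ and gives the ``only if'' half.

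To bound the order, observe that $\sqrt5-1$ is the largest zero of $(c-1)G_1+G_2=x^2+cx+(c-4)$ when $c=2-2/\sqrt5>1$, so by Remark~\ref{rem:zero} it is the second eigenvalue of $T(4,3,c)$. Theorem~\ref{bound} then yields $|V(G)|\le\lfloor M(4,3,c)\rfloor=\lfloor 5+12/c\rfloor=15$, and since Lemma~\ref{gbound} gives $n(4,5)\ge 17>15$, the girth of $G$ is $3$ or $4$.

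Next I would run the interlacing engine of Proposition~\ref{cubicsqrt2}. Fix a shortest cycle $H\cong C_m$ with $m\in\{3,4\}$ and form the quotient matrix
\[
Q=\begin{pmatrix}2&2&0\\ \gamma&4-(\alpha+\gamma)&\alpha\\ 0&\beta&4-\beta\end{pmatrix}
\]
of the distance partition $\{V(H),\gamh1,\gamhg2\}$, where $\gamma|\gamh1|=2m$ counts the edges leaving $H$ and $\alpha|\gamh1|=\beta|\gamhg2|$ counts the edges between $\gamh1$ and $\gamhg2$. Interlacing forces $\lambda_2(Q)\le\lambda_2(G)\le\sqrt5-1$. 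Since $\deg H=2>\sqrt5-1$, Lemma~\ref{subh1} forces the average internal degree $4-\beta$ of the subgraph $K$ induced on $\gamhg2$ to satisfy $4-\beta<\sqrt5-1$ (strictly, as $2\neq\sqrt5-1$), i.e.\ $\beta>5-\sqrt5$, so $\gamhg2$ is sparse. Together with $\gamma|\gamh1|=2m$ (hence $|\gamh1|\le 2m$), the resulting bound $|\gamhg2|\le\frac{3}{5-\sqrt5}|\gamh1|$, and $|V(G)|\le15$, only finitely many triples $(\gamma,\alpha,\beta)$ of average local degrees survive. For each I would compute $\lambda_2(Q)$ in closed form, as the larger root of the quadratic $\det(xI-Q)/(x-4)$, and discard every configuration with $\lambda_2(Q)>\sqrt5-1$ or with $\lambda_2(G)\le 1$; the survivors should pin down $|V(G)|=10$ and, after reconstructing the adjacencies, exactly the two named graphs.

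The main obstacle is the girth-$4$ step. Unlike in Proposition~\ref{cubicsqrt2}, where every configuration collapsed to a contradiction, here the extremal configurations genuinely survive, so the interlacing bound alone does not finish the job: one must carry out the finite but delicate reconstruction of all $4$-regular girth-$4$ graphs on $10$ through $15$ vertices compatible with the admissible parameters, show that every order exceeding $10$ forces $\lambda_2>\sqrt5-1$, and separate the Figure~\ref{sqrt5} graph from $\mbox{Ci}_{10}(1,4)$ to establish that these are the only two with $\lambda_2=\sqrt5-1$. A secondary nuisance is that a triangle lets $2m=6$ edges leave $H$, so the girth-$3$ vertex bound is weaker than in the cubic case; I nonetheless expect the same quotient-matrix casework to eliminate girth $3$ entirely for $1<\lambda_2\le\sqrt5-1$, at the cost of a longer enumeration.
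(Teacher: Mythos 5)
Your first step reproduces the paper's bound exactly: the paper also takes $c=(4-(\sqrt5-1)^2)/\sqrt5=2-2/\sqrt5$, computes $M(4,3,c)\approx15.85$, and concludes $v(4,\sqrt5-1)\le15$ from Theorem~\ref{bound}; your spectral computation for $\mbox{Ci}_{10}(1,4)$ and the reduction to girth $3$ or $4$ are also correct. The divergence is in the second half. The paper does not attempt any quotient-matrix casework here: it finishes by an exhaustive computer check of all $906{,}331$ connected $4$-regular graphs on at most $15$ vertices, and immediately afterwards remarks that finding a proof avoiding such a search is open. Your proposed replacement is therefore not a completed alternative but a plan, and beyond the incompleteness you already concede for the reconstruction step, it contains a claim that is false.

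The problem is the girth-$3$ branch. You expect the quotient-matrix casework to ``eliminate girth $3$ entirely,'' but the graph of Figure~\ref{sqrt5} is a connected $4$-regular graph on $8$ vertices, and every such graph other than $K_{4,4}$ contains a triangle: writing it as the complement of a cubic graph $\Delta$ on $8$ vertices, counting triples by the number of edges they induce gives $t(\overline{\Delta})=\binom{8}{3}-12\cdot6+8\binom{3}{2}-t(\Delta)=8-t(\Delta)$, and $t(\Delta)=8$ forces $\Delta=2K_4$, i.e.\ $\overline{\Delta}=K_{4,4}$, which has $\lambda_2=0$. So one of the two extremal graphs lies in the girth-$3$ case; that case cannot close with a contradiction, and the same ``delicate reconstruction'' you flag for girth $4$ is unavoidable there too. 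More generally, $\lambda_2(Q)\le\lambda_2(G)$ is only a necessary condition, so many non-extremal triples $(\gamma,\alpha,\beta)$ on $10$--$15$ vertices will survive the quotient test, and the proposal gives no mechanism for disposing of them or for proving uniqueness of the two graphs at order $8$ and $10$. As written, the argument is complete only up to the $15$-vertex bound; the classification itself still rests on an enumeration that is neither carried out nor known (even to the authors, who pose it as an open problem) to be feasible by hand.
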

\begin{figure}[h]
\centering
\includegraphics[width=.15\linewidth]{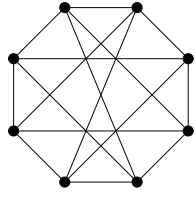}
\caption{The 4-regular graph $G$ on 8 vertices with $\lambda_2(G)=\sqrt{5}-1$.}\label{sqrt5}
\end{figure}
\begin{proof}
It is straightforward to verify that the second eigenvalue of $T(4,3,(4-(\sqrt{5}-1)^2)/\sqrt{5})=\sqrt{5}-1$ and $M(4,3,(4-(\sqrt{5}-1)^2)/\sqrt{5})=5+12\sqrt{5}/(4-(\sqrt{5}-1)^2)\approx15.85$, so by Theorem \ref{bound} we have $v(4,\sqrt{5}-1)\leq15$.
%
We checked by computer all 4-regular graphs on at most 15 vertices and found that, in each case where $\lambda_2(G)>1$, we have $\lambda_2(G)\geq\sqrt{5}-1$, with equality if and only if $G$ is either the graph in Figure \ref{sqrt5} or the circulant graph $\mbox{Ci}_{10}(1,4)$.
%
%
\end{proof}
The previous result and Theorem \ref{main5} part (iii) imply that $v(4,\sqrt{5}-1)=12$. It would be interesting to find a proof of Proposition \ref{quarticsqrt5} which does not require a computer search. For the proof above the computer must check 906,331 graphs.

Richey, Shutty, and Stover \cite{RSS} conjectured that $v(3,1.9)=18$. We confirm this conjecture, and show that there are exactly two graphs meeting this bound.
\begin{proposition}\label{oneptnine}
If $G$ is a connected, 3-regular graph with second largest eigenvalue $\lambda_2(G)\leq1.9$, then $\abs{V(G)}\leq18$, with equality if and only if $G$ is the Pappus graph (see Figure \ref{pappus}) or the graph in Figure \ref{onept9}.
\end{proposition}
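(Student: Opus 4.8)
The plan is to mirror the strategy of Proposition \ref{cubicsqrt2}: bound $\abs{V(G)}$ from above by the linear programming machinery, reduce the possibilities by controlling the girth, and then finish the small number of surviving orders by a structural/interlacing analysis backed by an exhaustive check. First I would locate the tridiagonal matrix governing $\lambda=1.9$. For $k=3$ one has $\lambda^{(3)}\approx1.81<1.9<\lambda^{(4)}\approx2.19$ (the largest zeros of $G_3$ and $G_4$), so Proposition \ref{prop:c} tells us the optimal choice is $t=5$, namely the $c>1$ for which the second eigenvalue of $T(3,5,c)$ equals $1.9$. Solving $(c-1)G_3(1.9)+G_4(1.9)=0$ gives $c\approx7.18$, whence $M(3,5,c)=22+24/c\approx25.34$, and Theorem \ref{bound} yields $v(3,1.9)\le25$. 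Since $G$ is cubic its order is even, so $\abs{V(G)}\le24$.

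Next I would bound the girth $g$. By Lemma \ref{gbound} a cubic graph of girth $8$ has at least $30$ vertices, so $g\le7$. Applying Lemma \ref{subh} to an induced girth cycle $H=C_g$ (whose average degree is $2>1.9$, and for which $ks-2t=g$) gives, after rounding down to an even integer, $\abs{V(G)}\le10$ when $g=3$, $\abs{V(G)}\le14$ when $g=4$, $\abs{V(G)}\le18$ when $g=5$, and $\abs{V(G)}\le22$ when $g=6$. When $g=7$ the only admissible order is $24$, which forces $G$ to be the unique $(3,7)$-cage, the McGee graph; a direct computation shows its second eigenvalue exceeds $1.9$, so this case is impossible. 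Hence any graph with more than $18$ vertices must have girth exactly $6$ and order $20$ or $22$, while the equality case $\abs{V(G)}=18$ can arise only with girth $5$ or $6$.

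It remains to (a) exclude girth-$6$ graphs on $20$ and $22$ vertices, and (b) classify the cubic graphs of girth $5$ or $6$ on $18$ vertices with $\lambda_2\le1.9$. For these I would first extract the forced local structure: from a vertex $v$ the sets $\{v\},\Gamma_1(v),\Gamma_2(v)$ induce a tree on $10$ vertices (girth $6$ forbids edges inside $\Gamma_1(v)$ and $\Gamma_2(v)$), each vertex of $\Gamma_2(v)$ sends exactly two edges into $\Gamma_{\geq 3}(v)$, and for order $22$ one is forced to have $\abs{\Gamma_{\geq 3}(v)}=12$ with $\Gamma_{\geq 3}(v)$ inducing a disjoint union of cycles of length at least $6$. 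The associated distance-partition quotient matrix then has characteristic polynomial $(x-3)G_3(x)$, whose second root is $\lambda^{(3)}\approx1.81$, so interlacing yields only $\lambda_2(G)\ge1.81$. Since every subgraph bound of Lemma \ref{subh} is likewise consistent with orders as large as $22$, I would finish by combining these structural constraints with a computer enumeration in the style of Proposition \ref{quarticsqrt5}: for orders $20$ and $22$ only girth-$6$ cubic graphs occur and these are few, and for order $18$ the girth-$5$ and girth-$6$ cubic graphs are readily generated; checking $\lambda_2$ in each case shows that no graph of order $20$ or $22$ satisfies $\lambda_2\le1.9$ and that exactly two graphs of order $18$ do, namely the Pappus graph (see Figure \ref{pappus}) and the graph of Figure \ref{onept9}.

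The main obstacle is step (a). The linear programming bound stalls at $25$, the cycle bound of Lemma \ref{subh} stalls at $22$, and, as the computation above shows, the distance-partition interlacing stalls at $1.81<1.9$ because a hypothetical extremal graph of order $20$ or $22$ would be driven toward the near-distance-regular profile of $T(3,5,c)$; no purely local estimate distinguishes such a graph from one with $\lambda_2\le1.9$. Closing the gap between the local bound of $22$ and the true value $18$ is precisely what forces the exhaustive verification, and a conceptual, computer-free argument for this step would be the natural improvement to seek.
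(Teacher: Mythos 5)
Your proposal is correct and follows essentially the same route as the paper: the linear programming bound of Theorem \ref{bound} gives $v(3,1.9)\leq 25$ and hence $\leq 24$ by parity (the paper uses $T(3,4,2641/3510)$ where you use $T(3,5,c)$, a cosmetic difference), Lemma \ref{subh} applied to a girth cycle together with Lemma \ref{gbound} forces girth $6$ or $7$ for orders above $18$, the girth-$7$ case is killed via the McGee graph, and the remaining girth-$6$ graphs on $20$ and $22$ vertices and girth-$5$/$6$ graphs on $18$ vertices are settled by computer enumeration. Your closing observation that the local interlacing estimates stall short of $1.9$ matches the paper's implicit reliance on the exhaustive check for exactly these cases.
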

\begin{figure}[h]
\centering
\begin{subfigure}{.3\linewidth}
\centering
\includegraphics[width=.66\textwidth]{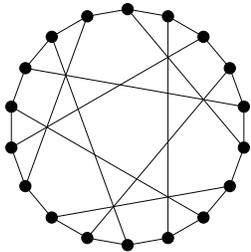}
\caption{The Pappus graph with second eigenvalue $\sqrt3$.\\ \phantom{words}}\label{pappus}
\end{subfigure}
~
\begin{subfigure}{.5\linewidth}
\centering
\includegraphics[width=.4\textwidth]{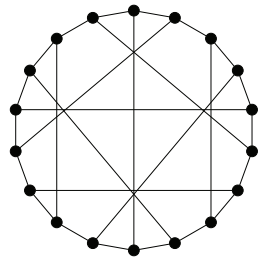}
\caption{A graph with $\lambda_2=\gamma\approx1.8662$, the largest root of $f(x)=x^3+2x^2-4x-6$.}\label{onept9}
\end{subfigure}
\caption{The 3-regular graphs on 18 vertices with $\lambda_2<1.9$.}
\end{figure}
\begin{proof}
It is straightforward to verify that the second eigenvalue of $T(3,4,2641/3510)=19/10=1.9$ and $M(3,4,2641/3510)=68530/2641\approx25.95$, so by Theorem \ref{bound} we have $v(3,1.9)\leq25$. Since $G$ is 3-regular, this implies $v(3,1.9)\leq24$.
We note again that any cycle has spectral radius 2. 
Then, by Lemma \ref{subh}, if $G$ has girth 3, 4, 5, or 6, then $G$ has at most 11.45, 15.27, 19.09, or 22.91 vertices, respectively. Since $G$ is 3-regular, this implies $G$ has at most 10, 14, 18, or 22 vertices, respectively. A 3-regular graph of girth 8 has at least 30 vertices by Lemma \ref{gbound} (or note that the Tutte-Coxeter graph is the unique (3,8)-cage, see \cite{WT2,WT}). Thus, we have shown that a 3-regular graph $G$ with $\lambda_2(G)\leq1.9$ and more than 18 vertices must have girth 6 or 7.

If $G$ has girth 7, we note that the McGee graph on 24 vertices is the unique (3,7)-cage (see \cite[p.209]{BCN} or \cite{WM,WT}), so $G$ must be the McGee graph. Since the McGee graph has second eigenvalue 2, we have proved that $G$ does not have girth 7.

Now, if $G$ has more than 18 vertices then $G$ must have girth $6$ and at most 22 vertices. Among 3-regular graphs, we checked by computer the 32 graphs with girth 6 on 20 vertices and the 385 graphs with girth 6 on 22 vertices and found that each has second eigenvalue more than 1.9. Thus $G$ has at most 18 vertices. If $G$ has 18 vertices, then $G$ must have girth 5 or 6. Among 3-regular graphs, we checked by computer the 450 graphs with girth 5 on 18 vertices and found that each has second eigenvalue more than 1.9. We checked the 5 graphs with girth 6 on 18 vertices and found that all but two of them have second eigenvalue more than 1.9. The exceptions were the Pappus graph with second eigenvalue $\sqrt{3}$ and the graph in Figure \ref{onept9} with second eigenvalue $\gamma$, where $\gamma\approx1.8662$ is the largest root of $f(x)=x^3+2x^2-4x-6$.
\end{proof}
Note that this implies $v(3,\sqrt{3})=18$ and $v(3,\gamma\approx1.8662)=18$ (and, of course, $v(3,1.9)=18$). It would be nice to find a proof of Proposition \ref{oneptnine} that does not require a computer search.

\section{Final Remarks}

We conclude the paper with some questions and problems for future research.
\begin{problem}
Determine $v(k,\sqrt{k})$ for $k\geq 3$.
\end{problem}
We have $\lambda_2(T(k,4,k-\sqrt{k}))=\sqrt{k}$ and $M(k,4,k-\sqrt{k})=2 k^2+k^{3/2}-k-\sqrt{k}+1$, which yields 
\[
v(k,\sqrt{k}) \leq 2 k^2+k^{3/2}-k-\sqrt{k}+1.
\]
The Odd graph $O_4$ meets this bound (see Table \ref{tab:1}). We do not know what other graphs, if any, meet this bound. Odd graphs, in general, do not have $T(k,t,c)$ as a quotient matrix.



\begin{problem}\label{psqrt2}
Determine $v(k, \sqrt{2})$ for $k\geq 3$.
\end{problem}
Recall that for $k=3$ we have $v(3,\sqrt2)=14$ and the Heawood is the unique graph meeting this bound. 
For $k>3$ we note that Lemma \ref{subh} with $H=K_3$ implies that a graph $G$ with $\lambda_2(G)\leq\sqrt{2}$ and girth 3 satisfies $\abs{V(G)}\leq3(k-1)\left(1+\frac{k-2}{k-\sqrt{2}}\right)$, and Lemma \ref{subh} with $H=K_{1,3}$ implies that such a graph with girth more than 3 satisfies $\abs{V(G)}\leq4+2(2k-3)\left(1+\frac{k-1}{k-\sqrt{2}}\right)$ (note that in both cases we have $\rad{H}>\lambda_2(G)$). Combining this with Lemma \ref{gbound} allows one to restrict the search to graphs with certain girth. For $k\geq7$, $n_l(k,g)$ is larger than these bounds unless the girth is at most 4, and for $k=4$, 5, or 6 $n_l(k,g)$ is larger than these bounds unless the girth is at most 5. Thus the graphs sought in Problem \ref{psqrt2} must have girth at most 5 for $k=4,5,6$ and girth at most 4 for $k\geq7$.

\begin{problem}
Among regular graphs, what is the smallest second eigenvalue larger than 1?
\end{problem}
Yu \cite{Y} found a 3-regular graph $G$ on 16 vertices (see Figure \ref{largestleast}) 
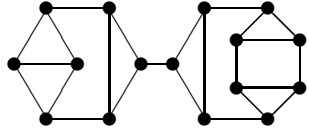
\begin{figure}[h]
\centering
\begin{picture}(74,30)(0,0)
\put(0,14){\circle*{3}}
\put(8,0){\circle*{3}}
\put(8,28){\circle*{3}}
\put(16,14){\circle*{3}}
\put(24,0){\circle*{3}}
\put(24,28){\circle*{3}}
\put(32,14){\circle*{3}}
\put(40,14){\circle*{3}}
\put(48,0){\circle*{3}}
\put(48,28){\circle*{3}}
\put(56,8){\circle*{3}}
\put(56,20){\circle*{3}}
\put(64,0){\circle*{3}}
\put(64,28){\circle*{3}}
\put(72,8){\circle*{3}}
\put(72,20){\circle*{3}}
\put(0,14){\line(3,5){8}}
\put(0,14){\line(3,-5){8}}
\put(0,14){\line(1,0){16}}
\put(16,14){\line(-3,-5){8}}
\put(16,14){\line(-3,5){8}}
\put(8,0){\line(1,0){16}}
\put(8,28){\line(1,0){16}}
\put(24,28){\line(0,-1){28}}
\put(32,14){\line(-3,-5){8}}
\put(32,14){\line(-3,5){8}}
\put(32,14){\line(1,0){8}}
\put(40,14){\line(3,5){8}}
\put(40,14){\line(3,-5){8}}
\put(48,28){\line(0,-1){28}}
\put(48,0){\line(1,0){16}}
\put(48,28){\line(1,0){16}}
\put(56,20){\line(0,-1){12}}
\put(56,20){\line(1,1){8}}
\put(56,8){\line(1,-1){8}}
\put(56,20){\line(1,0){16}}
\put(56,8){\line(1,0){16}}
\put(72,20){\line(-1,1){8}}
\put(72,8){\line(-1,-1){8}}
\put(72,20){\line(0,-1){12}}
\end{picture}
\caption{The unique 3-regular graph with largest least eigenvalue less than $-2$.}\label{largestleast}
\end{figure}
with smallest eigenvalue $\lambda_{\min}=\gamma\approx-2.0391$, where $\gamma$ is the smallest root of $f(x)=x^6-3x^5-7x^4+21x^3+13x^2-35x-4$, and moreover proved that there is no connected, 3-regular graph with smallest eigenvalue in the interval $(\gamma,-2)$ (that is, among all connected, 3-regular graphs $G$ has the largest least eigenvalue less than $-2$). Since the second eigenvalue of the complement of a regular graph is $\lambda_2=-1-\lambda_{\min}$, the complement $\overline{G}$ of $G$, a 12-regular graph on 16 vertices, has second eigenvalue $\lambda_2(\overline{G})=-1-\gamma\approx1.0391$. We do not know if $\overline G$ has smallest second eigenvalue larger than 1 among regular graphs, but it is not unique. Indeed, 
the complement of the disjoint union $G+kK_4$ of $G$ and $k$ copies of $K_4$ is a connected, $(12+4k)$-regular graph on $16+4k$ vertices with second eigenvalue $\lambda_2(\overline{G+kK_4})=-1-\gamma$, so we have found an infinite family of regular graphs with second eigenvalue $-1-\gamma$. 

\begin{problem}
For any integer $k\geq 2$, let $\lambda(k) := (-1+ \sqrt{4k-3})/2$. Then we find that $v(k, \lambda(k))\leq k^2 + 1$ with equality if and only if the associated graph is a Moore graph of diameter $2$.  
Moore graphs of diameter $2$ only exists for $k=2, 3, 7$, and possibly $57$. If $k$ is not $2,3,7,57$, then $v(k, \lambda(k)) \leq k^2$. Determine the exact value of $v(k, \lambda(k))$ in these cases.
\end{problem}

An $(n,k,\lambda)$-graph is a $k$-regular graph with $n$ vertices such that $|\lambda_i|\leq \lambda$ for $i\geq 2$. This notion was introduced by Alon (see \cite{A,KS}) motivated by the study of pseudo-random graphs and expanders among other things. The following question seems natural and interesting. 
\begin{problem}
Given $k\geq 3$ and $1<\lambda<2\sqrt{k-1}$, what is the maximum order $n$ of an $(n,k,\lambda)$-graph ?
\end{problem}

\section*{Acknowledgments}

The authors thank Joel Friedman, Chris Godsil, Bill Martin and an anonymous referee for some useful comments and suggestions.


\begin{thebibliography}{99}
{

\bibitem{A}
N.\ Alon, Eigenvalues and expanders, {\em Combinatorica} \textbf{6} (1986), 83--96.

\bibitem {AHL} A. Amit, S. Hoory and N. Linial, A continuous analogue of the girth problem, {\em J. Combin. Theory, Series B} {\bf 84} (2002), 340--363.

\bibitem{BIb}
E. Bannai and T. Ito, \emph{Algebraic Combinatorics I: Association Schemes}, Benjamin/Cummings, Menlo Park, CA, 1984.

\bibitem{B66}
C.T. Benson, 
Minimal regular graphs of girths eight and twelve,  
{\it Canad.\ J. Math.} \textbf{18} (1966), 1091--1094. 

\bibitem{BBS}
N.L.\ Biggs, A.G.\ Boshier, and J.\ Shawe-Taylor, Cubic distance-regular graphs, {\em J. London
Math. Soc.} (2) \textbf{33} (1986), 385-394.

\bibitem{B83}
A.E. Brouwer, 
The uniqueness of the strongly regular graph on $77$ points,  
{\it J. Graph Theory} \textbf{7} (1983), no.\ 4, 455--461. 

\bibitem{BCN}
A.E.\ Brouwer, A.M.\ Cohen, and A.\ Neumaier,
\emph{Distance-regular graphs},
Springer, 1989.

\bibitem{BH93}
A.E. Brouwer and W.H. Haemers, 
The Gewirtz graph: an exercise in the theory of graph spectra, 
{\it European J. Combin.} \textbf{14} (1993), 397--407.

\bibitem{BH}
A.E. Brouwer and W.H. Haemers,
\emph{Spectra of Graphs},
Springer, 2012.


\bibitem{BCS}
F.C. Bussemaker, D.H. Cvetkovi\'c, and J.J. Seidel, Graphs related to exceptional root systems, {\em Report TH Eindhoven} 76-WSK-05, 1976.

\bibitem{CGSS}
P.J. Cameron, J.M. Goethals, J.J. Seidel, and E.E. Shult, Line graphs, root systems, and
elliptic geometry, {\em J. Algebra} \textbf{43} (1976), 305--327.

\bibitem{C}
S.M.\ Cioab\u{a}, On the extreme eigenvalues of regular graphs, \emph{J. Combin. Theory Ser. B} \textbf{96} (2006), 367--373.

\bibitem{CK07}
H. Cohn and A. Kumar, Universally optimal distribution of points on spheres, \emph{J. Amer. Math. Soc.} \textbf{20} (2007), no. 1, 99--184.

\bibitem{DG81}
R.M. Damerell and M.A. Georgiacodis, On the maximum diameter of a class of distance-regular graphs, \emph{Bull. London Math. Soc.} \textbf{13} (1981), 316--322.

\bibitem{Doyle} P.G. Doyle, A 27-vertex graph that is vertex-transitive and edge-transitive but not l-transitive, {\tt http://arxiv.org/abs/math/0703861}

\bibitem{F} J. Friedman, Some geometric aspects of graphs and their eigenfunctions.
{\em Duke Math. J.} {\bf 69} (1993), no. 3, 487--525. 

\bibitem{G69}
A. Gewirtz, 
Graphs with maximal even girth,
{\it  Canad.\ J. Math.} \textbf{21} (1969), 915--934.

\bibitem{GRb}
C. Godsil and G. Royle, \emph{Algebraic graph theory}, Graduate Texts in Mathematics 207, Springer-Verlag, New York, 2001.

\bibitem{GM}
K.\ Guo and B.\ Mohar, Large regular bipartite graphs with median eigenvalue 1,  \emph{Linear Algebra Appl.} \textbf{449} (2014), 68--75.

\bibitem{HS68}
D.G. Higman and C.C. Sims, 
A simple group of order 44,352,000, 
{\it Math.\ Z.} \textbf{105} (1968), 110--113. 

\bibitem{HS}
A.J.\ Hoffman and R.R.\ Singleton,
On Moore graphs with diameters 2 and 3, 
{\em IBM J. Res. Develop.} \textbf{4} (1960), 497--504.

\bibitem{HJ}
T.\ H{\o}holdt and J.\ Justesen, On the sizes of expander graphs and minimum distances of graph codes, {\em Discrete Math.} \textbf{325} (2014), 38--46.


\bibitem {Holt} D. Holt, A graph which is edge transitive but not arc transitive, {\em J. Graph Theory} {\bf 5} (1985) , 201-204.

\bibitem {HLW} S. Hoory, N. Linial and A. Wigderson, Expander graphs and
their applications, {\em Bull. Amer. Math. Soc.}  {\bf 46} (2006), 439-561.

\bibitem {KS} M. Krivelevich and B. Sudakov, Pseudo-random graphs. More sets, graphs and numbers, 199--262, Bolyai Soc. Math. Stud., 15, Springer, Berlin, 2006.


\bibitem {KoSt1} T. Koledin and Z. Stan\'{i}c, Regular graphs whose second largest eigenvalue is at most $1$, {\em Novi Sad J. Math.} {\bf 43} (2013), 145--153.

\bibitem {KoSt2} T. Koledin and Z. Stan\'{i}c, Regular graphs with small second largest eigenvalue, {\em Appl. Anal. Discrete Math.} {\bf 7} (2013), 235--249.

\bibitem{LPS}
A.\ Lubotzky, R.\ Phillips, P.\ Sarnak, Ramanujan graphs, {\em Combinatorica} \textbf{8} (1988), 261--277.

\bibitem{MSS}
A.\ Marcus, D.A.\ Spielman, N.\ Srivastava,
Interlacing Families I: Bipartite Ramanujan Graphs of All Degrees, {\em Annals of Math.}, to appear, also available at {\tt arXiv:1304.4132}.

\bibitem{WM}
W.F.\ McGee, A minimal cubic graph of girth seven, {\em Canad. Math. Bull.} \textbf{3} (1960), 149--152.

\bibitem{Mo}
B.\ Mohar, A strengthening and a multipartite generalization of the Alon--Boppana--Serre theorem, \emph{Proc. Amer. Math. Soc.} \textbf{138} (2010), 3899--3909.

\bibitem{Mo2}
B.\ Mohar, Median eigenvalues of bipartite planar graphs, \emph{MATCH Commun. Math. Comput. Chem.} \textbf{70} (2013), 79--84.

\bibitem{MT}
B.\ Mohar and B.\ Tayfeh-Rezaie, Median eigenvalues of bipartite graphs, \emph{J. Algebraic Combin.}, in press.

\bibitem{M}
A.\ Moon, Characterization of the odd graphs $O_k$ by parameters, {\em Discrete Math.} \textbf{42} (1982), 91--97.

\bibitem{Ni2}
A.\ Nilli, On the second eigenvalue of a graph, {\em Discrete Math.} {\bf 91} (1991), 207--210.

\bibitem{Ni1}
A.\ Nilli, Tight estimates for eigenvalues of regular graphs, \emph{Electron. J. Combin.} \textbf{11} (2004), no. 1, Note 9.

\bibitem{N}
H.\ Nozaki, Linear programming bounds for regular graphs, available at {\tt arXiv:1407.4562}.

\bibitem {RVW} O. Reingold, S. Vadhan and A. Wigderson, Entropy waves, the zig-zag graph product, and new constant-degree expanders, {\em Ann. of Math. (2)} {\bf 155} (2002), no. 1, 157--187.

\bibitem{R}
N.\ Robertson, The smallest graph of girth 5 and valency 4, {\em Bull. Amer. Math. Soc.} \textbf{70} (1964), 824--825.

\bibitem{S68}
J.J. Seidel, 
Strongly regular graphs with $(-1,1,0)$ adjacency matrix having eigenvalue $3$,  
{\it Linear Algebra and Appl.} \textbf{1} (1968), 281--298. 

\bibitem{S66}
R. Singleton, 
On minimal graphs of maximum even girth,
{\it J. Combin.\ Theory} \textbf{1} (1966), 306--332.

\bibitem {Serre1} J.-P. Serre, R\'{e}partition asymptotique des valeurs propres de l'op\'{e}rateur de Hecke $T_p$. (French) [Asymptotic distribution of the eigenvalues of the Hecke operator $T_p$] ,
{\em J. Amer. Math. Soc.} {\bf 10} (1997), 75--102. 

\bibitem {Stanic} Z. Stan\'{i}c, On regular graphs and coronas whose second largest eigenvalue does not exceed $1$, {\em Linear and Multilinear Algebra} {\bf 58} (2010), 545--554.

\bibitem {TeYa} Y. Teranishi and F. Yasuno, The second largest eigenvalues of regular bipartite graphs, {\em Kyushu J. Math.} {\bf 54} (2000), 39--54.

\bibitem{WT2}
W.T.\ Tutte, A family of cubical graphs, {\em Proc. Cambridge Phil. Soc.} {\bf 43} (1947), 459--474.

\bibitem{WT}
W.T.\ Tutte, \emph{Connectivity in Graphs}, University of Toronto Press, 1966.

\bibitem{RSS}
J.\ Richey, N.\ Shutty and M.\ Stover, Finiteness theorems in spectral graph theory, available at {\tt arXiv:1306.6548}.

\bibitem{Y}
H.\ Yu, On the limit points of the smallest eigenvalues of regular graphs, {\em Des. Codes Cryptogr.} \textbf{65} (2012), no. 1-2, 77--88.
}
\end{thebibliography}
\end{document}